\DeclareMathOperator{\Spec}{Spec}
\def\L{\mathcal{L}}
\def\A{\mathbb{A}}
\def\Z{\mathbb{Z}}
\def\G{\mathbb{G}}
\def\P{\mathbb{P}}
\def\O{\mathcal{O}}
\def\longto{\longrightarrow}
\def\onto{\twoheadrightarrow}
\def\into{\hookrightarrow}
\def\F{\mathbb{F}}
\theoremstyle{plain}
\newtheorem{thm}{Theorem}
\newtheorem{Lem}[thm]{Lemma}
\newtheorem{Prop}[thm]{Proposition}
\newtheorem{Conj}[thm]{Conjecture}
\newtheorem{Def}[thm]{Definition}
\theoremstyle{remark}
\newtheorem{Rem}[thm]{Remark}
\newtheorem{Que}[thm]{Question}
\newtheorem{Exa}[thm]{Example}
\begin{document}

\title{An algebraic geometry version of the Kakeya problem}
\author{Kaloyan Slavov}
\maketitle
\begin{abstract}
We propose an algebraic geometry framework for the Kakeya problem. We conjecture that for any polynomials $f,g\in\F_{q_0}[x,y]$ and any $\F_q/\F_{q_0}$, the image of the map $\F_q^3\to\F_q^3$ given by $(s,x,y)\mapsto (s,sx+f(x,y),sy+g(x,y))$ has size at least $\frac{q^3}{4}-O(q^{5/2})$ and prove the special case when $f=f(x), g=g(y).$ We also prove it in the case $f=f(y), g=g(x)$ under the additional assumption $f'(0)g'(0)\neq 0$ when $f,g$ are both linearized.  Our approach is based on a combination of Cauchy--Schwarz and Lang--Weil.
The algebraic geometry inputs in the proof  are various results concerning irreducibility of certain classes of multivariate polynomials. 
\end{abstract}

{\bf Keywords:} Kakeya problem; image set on $F_q$-points; Lang-Weil bound; reducibility of polynomials in several variables; number of irreducible components of a variety; indecomposable polynomials; linearized polynomials; permutation polynomials.

\section{Introduction}

The Kakeya problem is a major open problem in classical harmonic analysis: if a compact subset $E\subset\mathbb{R}^n$ contains a unit line segment in every direction,  then $E$ has Hausdorff and Minkowski dimension $n.$ This is known for $n=2$; see \cite{Tao_blog_Kakeya} for a survey, history, and references. In 1999, T. Wolff \cite{W} proposed a finite field model for the Kakeya problem:
if $E\subset\F_q^n$ contains a line in any direction, then $|E|\geq c_n q^n,$ for some $c_n$ which depends only on $n$.  The finite field Kakeya problem has proved to be a 
useful model for the classical much harder Euclidean problem. 
After a long period of frustration, the finite field problem was proved by Z. Dvir in \cite{Dvir} by a short and elegant argument
based on the polynomial method. In brief, if $E\subset\F_q^n$ is a Kakeya subset of small size, one can find a hypersurface $V(f)$ over $\F_q$ of degree $d<q$ which vanishes on $E$. Then the condition that $E$ is Kakeya will force the homogeneous piece of $f$ of top degree
to vanish on all of $\P^{n-1}(\F_q),$ and this contradicts the Schwartz--Zippel lemma.

We propose an algebraic geometry version of the Kakeya problem. The main motivation is that the smallest known example of a Kakeya subset of $\F_q^n$ comes from
\[\{(a_1,...,a_{n-1},b)\in\F_q^n\ |\ a_i+b^2\ \text{is a square in $\F_q$\ for all $i$}\}\subset\F_q^n\]
(say $q$ is odd for convenience; see \cite{SarafSudan}). 
Our starting observation is that this is in fact the image on $\F_q$-points of
\[\xymatrix{
V\left(a_1+b^2-c_1^2,...,a_{n-1}+b^2-c_{n-1}^2\right)
\ar@{^{(}->}[r]\ar[rd] & \A^{2n-1}_{a_1,...,a_{n-1},b,c_1,...,c_{n-1}}\ar[d]\\
{}& \A^n_{a_1,...,a_{n-1},b}
}\]
So, this Kakeya {\it subset} of $\F_q^n$ comes from a {\it variety} already defined over $\F_p$ (in fact, over $\Z$) and hence inherits extra structure, which should not be neglected. We give a definition of a ``Kakeya variety" that models this example.  

We define a Kakeya variety over a base field, generalizing the example coming from the quadric hypersurfaces. In brief, let $E$ be a variety over a base field $k_0$, together with a morphism $E\to\P^n_{k_0}$ over $k_0$. Let $H_0=V(x_0)$ be the hyperplane at infinity, and so $H_0\simeq \P^{n-1}$ parametrizes the directions of lines in $\P^n$ not contained in $H_0$. There is a variety $F(E)$ over $k_0$ such that for a field $K/k_0$, the set $F(E)(K)$ consists of all $K$-morphisms $\P^1_K\to E_K$ such that the composition $\P^1_K\to E_K\to\P^n_K$ gives rise to a line not contained in $H_0$. We say that $(E,E\to\P^n)$ is Kakeya if the direction map $F(E)\to H_0$ has a rational section. 

A Kakeya variety in this strong algebraic sense over a finite field $\F_{q_0}$ gives rise to a Kakeya subset $E_{\F_q}$ of $\F_q^n$ (after adding $O(q^{n-1})$ points if necessary), for any $\F_q/\F_{q_0}$, by taking image on $\F_q$-points in the affine chart. Our goal here is to give a lower bound for 
$\#E_{\F_q}$ by using a uniform geometric argument, which, ideally, refers only to the base field $\F_{q_0}$ and its algebraic closure $\overline{\F_p}$. Note that Dvir's proof uses a hypersurface of degree $d<q$ for a Kakeya subset of $\F_q^n$, hence it is specific to the given $\F_q^n$. In other words,
for {\it each} $\F_q/\F_{q_0}$, Dvir's argument for the size of $E_{\F_q}$ would pick a different hypersurface, whose degree varies with $q$. Our project, however, is to give a uniform geometric argument for all $\F_q/\F_{q_0}$ at once. Such an argument would give further understanding of the geometry behind the Kakeya problem. 

We emphasize that our goal is not to redo the finite field Kakeya problem, which is already known anyways. Rather, our goal is to give an algebraic geometry {\it framework} for the Kakeya problem. Our investigation leads to interesting algebraic geometry questions on their own right (specifically, questions about reducibility of certain classes of polynomials), and we hope that, conversely, our  approach  might interact with previous classical frameworks for the Kakeya problem. For any (combinatorial) Kakeya subset $E_0\subset\F_q^n$, we can find a Kakeya variety $E$ over $\F_q$ such that $E_0$ arises from the $\F_q$-points of $E$; however, $E$ may have large complexity, and since the error terms in our approach depend on the complexity of $E$, this will not be useful for a bound on the size of the specific $E_0$ (again, this is not our goal). The algebraic geometry tools that we use are suitable for the regime when $q$ becomes large relative to the complexity of $E\to\P^n$. 

Specifically, let $n=3$ and consider a Kakeya variety $E\to\P^3$ over $\F_{q_0}$. Let $E_{\F_q}$ be the image on $\F_q$-points. We conjecture that
\[|E_{\F_q}|\geq\frac{q^3}{4}-O(q^\frac{5}{2})\]
(where the implied constant depends on the complexity of $E\to\P^n$).
Making explicit the algebraic Kakeya condition, this statement is essentially the following:

\begin{Conj}
Let $L(t_1,t_2),M(t_1,t_2)\in\F_{q_0}[t_1,t_2]$ be arbitrary polynomials
in two variables.
Consider the map
\begin{align*}
\varphi:\A^3_{\F_{q_0}} &\longrightarrow \A^3_{\F_{q_0}}\\
(s,t_1,t_2) &\longmapsto (s,st_1+L(t_1,t_2),st_2+M(t_1,t_2)).
\end{align*}
For each extension $\F_q/\F_{q_0}$, let $E_{\F_q}$ be the image of the induced map $\A^3(\F_q)\to\A^3(\F_q)$ on 
$\F_q$-points. Then
\[|E_{\F_q}|\geq\frac{q^3}{4}-O(q^\frac{5}{2}),\]
where the implied constant depends only on the degrees of $L$ and $M$.
\label{main_conj}
\end{Conj}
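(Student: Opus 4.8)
The plan is to fibre the image over the first coordinate and combine Cauchy--Schwarz with Lang--Weil, reducing the whole problem to counting top-dimensional components of an explicit correspondence variety. Since the first coordinate of $\varphi$ is $s$ itself, the image splits as $|E_{\F_q}|=\sum_{s\in\F_q}|E_s|$, where $E_s$ is the image of $\psi_s\colon\A^2\to\A^2$, $(t_1,t_2)\mapsto(st_1+L,\,st_2+M)$. For each $s$, writing $r_s(u,v)$ for the number of preimages of $(u,v)$ under $\psi_s$ and applying Cauchy--Schwarz to $q^4=\big(\sum_{u,v}r_s(u,v)\big)^2\le |E_s|\cdot\sum_{u,v}r_s(u,v)^2$, one gets $|E_s|\ge q^4/\#W_s(\F_q)$, where $W_s\subset\A^4$ is the self fibre product cut out by $s(t_1-t_1')+L(t_1,t_2)-L(t_1',t_2')=0$ and $s(t_2-t_2')+M(t_1,t_2)-M(t_1',t_2')=0$. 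By Lang--Weil, $\#W_s(\F_q)=c_s q^2+O(q^{3/2})$, where $c_s$ is the number of two-dimensional geometrically irreducible components of $W_s$ defined over $\F_q$. Everything then reduces to the estimate $\sum_s 1/c_s\ge q/4-o(q)$, and the central task is to bound the component count $c_s$.

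The key geometric input is the factorization of the defining equations. The diagonal $\{t_1=t_1',\,t_2=t_2'\}$ always contributes one component, and in the product case $L=L(t_1)$, $M=M(t_2)$ each equation factors as $(t_1-t_1')\big(s+\Phi_L(t_1,t_1')\big)$ and $(t_2-t_2')\big(s+\Phi_M(t_2,t_2')\big)$ with $\Phi_L(t,t')=\frac{L(t)-L(t')}{t-t'}$ the divided difference. First I would treat this product case, where $E_s=I_s^L\times I_s^M$ is a product of one-variable images. The one-variable correspondence surface $\{(s,t,t'):st+L(t)=st'+L(t')\}=\{t=t'\}\cup\{s+\Phi_L=0\}\subset\A^3$ has exactly two components, each geometrically irreducible over $\F_{q_0}$, so a two-variable Cauchy--Schwarz--Lang--Weil for $(s,t)\mapsto(s,st+L(t))$ gives $\sum_s|I_s^L|\ge q^2/2-O(q^{3/2})$. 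When $L=M$ a further Cauchy--Schwarz yields $\sum_s|I_s^L|^2\ge\big(\sum_s|I_s^L|\big)^2/q\ge q^3/4-O(q^{5/2})$, which is exactly the claim; this is the cleanest instance.

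For general $L\ne M$ in the product case the hard part is that $\sum_s|I_s^L|\,|I_s^M|$ can a priori be spoiled by anti-correlation: one must rule out that $t\mapsto st+L(t)$ is nearly bijective precisely when $t\mapsto st+M(t)$ is far from it. I would control this through the components of $\{\Phi_L=-s\}$. For $L$ indecomposable and not linearized, $\{\Phi_L+s=0\}\subset\A^3$ is an irreducible graph whose generic fibre is geometrically irreducible, so by a specialization argument $c_s^L=2$ for all but $O(1)$ values of $s$, whence $|I_s^L|\gtrsim q/2$ and $\sum_s|I_s^L|\,|I_s^M|\gtrsim q^3/4$ with only $O(1)$ exceptional $s$. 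The genuinely delicate regime is when $L$ or $M$ is linearized: then $t\mapsto st+L(t)$ is $\F_p$-linear, $\{\Phi_L=-s\}$ degenerates into parallel lines for a positive proportion of $s$, and $|I_s^L|$ oscillates between $q$ and $q/p$. Here I would exploit that the ``bad'' sets $\{s:st+L(t)\text{ is not a permutation}\}$ are cut out by power-residue conditions on $-s$ and argue these cannot be adversarially complementary for $L$ and $M$; this is where the theory of permutation and linearized polynomials enters, and it is the main obstacle, accounting for the extra hypothesis $f'(0)g'(0)\ne 0$ in the swapped case $f=f(y),g=g(x)$.

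Finally, for fully general (non-product) $L,M$ the same framework applies, but the factorization of $W_s$ is no longer visible termwise. The crux becomes bounding the number of two-dimensional components of the family $\{L(t_1,t_2)-L(t_1',t_2')=\lambda(t_1-t_1'),\ M(t_1,t_2)-M(t_1',t_2')=\lambda(t_2-t_2')\}$ uniformly in the parameter $\lambda$, and showing it forces $\sum_s 1/c_s\ge q/4-o(q)$. I expect this component-counting --- equivalently, the irreducibility and indecomposability analysis of the associated multivariate polynomials --- to be the principal obstacle, and the reason the full conjecture remains open while only the structured cases above are provable by this method.
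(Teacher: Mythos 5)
Your framework coincides with the paper's: Cauchy--Schwarz against the self fibre product, Lang--Weil to count its $\F_q$-points, and a reduction to bounding the number of top-dimensional geometrically irreducible components of an explicit correspondence. The per-$s$ slicing is precisely the route of the paper's second proof of Proposition \ref{separated_vars}, and your double-Cauchy--Schwarz observation for $L=M$ in the product case (getting $\sum_s|I_s^L|^2\ge\bigl(\sum_s|I_s^L|\bigr)^2/q\ge q^3/4-O(q^{5/2})$ directly from the two-dimensional bound) is a clean addition that the paper does not make. Like the paper, you establish only structured special cases and correctly locate the remaining obstruction in the component count for general $L(t_1,t_2),M(t_1,t_2)$; that is exactly the paper's Question \ref{question_reducibility}.

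Three points where your plan needs repair or supplement. (i) In the non-linearized product case you invoke ``$L$ indecomposable'' plus a specialization argument to get $c_s^L=2$ for all but $O(1)$ values of $s$; but what is needed is indecomposability of the divided difference $\widetilde L(t,t')=(L(t)-L(t'))/(t-t')$, not of $L$, and the assertion that $\widetilde L$ is indecomposable whenever $L$ is not linearized is the paper's main technical lemma (Lemma \ref{truly_poly_in_two_vars}); it requires a genuine proof, after which Lorenzini's theorem gives irreducibility of $\widetilde L+s$ for all but $\deg L-1$ values of $s$. (ii) The linearized regime is not the delicate one, and no anti-correlation analysis is needed: for linearized $f$ the bad set $\{s:\ f(t)+st\ \text{is not a permutation}\}$ lies in the image of $t\mapsto -f(t)/t$ with every bad value hit at least $p-1$ times (a non-injective $\F_p$-linear map has kernel of size at least $p$), so it has density at most $1/(p-1)\le 1/4$ for $p\ge 5$, and a plain union bound leaves at least $q/2$ good $s$; your ``power-residue'' description is accurate only for monomial $f$. (iii) The hypothesis $L'(0)M'(0)\neq 0$ in the mixed case is unrelated to permutation polynomials: it is the hypothesis of Zieve's theorem bounding by $3$ the number of irreducible factors of $xL(x)-yM(y)$, which is the input the paper feeds into Lemma \ref{most_basic_lem_mixed_vars} to count components of the fibre product there. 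Your outline does not identify this ingredient, and without it the mixed case does not close.
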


We prove the following extreme special case\footnote{Under a technical assumption $p\geq 5$ on the characteristic.}:
\begin{Prop} Assume that $L(t_1,t_2)=L(t_1)$ 
and $M(t_1,t_2)=M(t_2)$ depend only on the first or second variable, respectively. Then Conjecture \ref{main_conj} holds true. 
\label{separated_vars}
\end{Prop}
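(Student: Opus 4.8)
The plan is to fiber the map $\varphi$ over its first coordinate $s$. Since $\varphi$ fixes $s$ and the last two coordinates separate, the fiber of $\varphi$ over a fixed $s\in\F_q$ is the product of the one-variable maps $u_s(t_1)=st_1+L(t_1)$ and $v_s(t_2)=st_2+M(t_2)$, so that $|E_{\F_q}|=\sum_{s\in\F_q}|\mathrm{Im}\,u_s|\cdot|\mathrm{Im}\,v_s|$ (the cases $\deg L\le 1$ or $\deg M\le 1$ give bijective factors and are trivial, so I may assume $\deg L,\deg M\ge 2$). I would then bound $|E_{\F_q}|$ from below by Cauchy--Schwarz on $\F_q$-points: writing $T=\#\{(P,P')\in\A^3(\F_q)^2:\varphi(P)=\varphi(P')\}$ for the number of collisions, one has $|E_{\F_q}|\geq q^6/T$. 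Because $\varphi$ fixes $s$, the collisions split as $T=\sum_s R_sS_s$, where $R_s$ (resp.\ $S_s$) counts pairs $(t_1,t_1')$ with $u_s(t_1)=u_s(t_1')$ (resp.\ the analogue for $v_s$). It then suffices to prove $T=4q^3+O(q^{5/2})$.

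The second step is to convert $T$ into a point count on a single variety. Removing the diagonal, write $R_s=q+r_s$, where $r_s$ counts off-diagonal pairs; dividing the relation $s(t_1-t_1')+L(t_1)-L(t_1')=0$ by $t_1-t_1'$ shows these satisfy $-s=\Delta L(t_1,t_1')$, with $\Delta L(x,y)=\frac{L(x)-L(y)}{x-y}$ the symmetric divided difference of degree $\deg L-1$; similarly $S_s=q+\sigma_s$ with $\sigma_s$ governed by $\Delta M$. Expanding $T=\sum_s(q+r_s)(q+\sigma_s)$ and using $\sum_s r_s=\sum_s\sigma_s=q^2-q$ (each off-diagonal pair determines a unique $s$), the first three terms contribute $3q^3+O(q^2)$, while the cross term is
\[\sum_s r_s\sigma_s=\#\{(t_1,t_1',t_2,t_2')\in\F_q^4:t_1\neq t_1',\ t_2\neq t_2',\ \Delta L(t_1,t_1')=\Delta M(t_2,t_2')\},\]
the number of $\F_q$-points on the hypersurface $V(\Delta L(t_1,t_1')-\Delta M(t_2,t_2'))\subset\A^4$ away from the two diagonals (whose removal costs only $O(q^2)$). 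This hypersurface is a threefold, so Lang--Weil gives exactly $q^3+O(q^{5/2})$ \emph{provided it is geometrically irreducible}, and combining the pieces yields $T=4q^3+O(q^{5/2})$ and hence $|E_{\F_q}|\geq q^6/T=\frac{q^3}{4}-O(q^{5/2})$.

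The heart of the matter, and the main obstacle, is the geometric irreducibility of $\Delta L(t_1,t_1')-\Delta M(t_2,t_2')$. I would deduce it from a fiber-product principle: $V(\Delta L-\Delta M)$ is the fiber product of the two dominant maps $\Delta L,\Delta M\colon\A^2\to\A^1$, and such a fiber product over an irreducible base is irreducible as soon as \emph{one} of the two maps has geometrically irreducible generic fiber. Thus it is enough that $\Delta L(x,y)-t$ (or its $M$-analogue) be irreducible over $\overline{\F_p(t)}$, i.e.\ that the divided difference be indecomposable. Here the degree enters decisively: the leading form of $\Delta L$ is $\frac{x^d-y^d}{x-y}=\prod_{\zeta^d=1,\,\zeta\neq1}(x-\zeta y)$ with $d=\deg L$, which is a product of \emph{distinct} linear forms precisely when $p\nmid d$; a squarefree leading form cannot be a nontrivial power, which rules out any decomposition $\Delta L=h\circ g$ with $\deg h\geq2$. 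This is where $p\ge 5$ is used, namely to clear the small-degree degeneracies such as $x^2+xy+y^2=(x-y)^2$ in characteristic $3$.

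The one case this argument does not reach is when neither $\Delta L$ nor $\Delta M$ has irreducible generic fiber; by the leading-form analysis this forces $p\mid\deg L$ and $p\mid\deg M$ and is essentially the linearized (additive) regime. There the fiber product genuinely acquires extra components, and the global Cauchy--Schwarz bound degrades below $q^3/4$, so I expect to treat this case separately and by opposite means: when $u_s(t_1)=st_1+L(t_1)$ is close to an additive polynomial, it is a bijection for all but $O(q)$ values of $s$, so $|\mathrm{Im}\,u_s|=q$ for most $s$ and $|E_{\F_q}|$ is in fact far larger than $q^3/4$. Making this precise --- classifying exactly when $\Delta L$ is decomposable and showing that this pins $L$ down to the linearized/permutation-polynomial regime in which the image is directly seen to be large --- is the second, more delicate ingredient, and is the other place where $p\ge 5$ and the structure theory of linearized polynomials enter.
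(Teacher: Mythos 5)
Your overall strategy coincides with the paper's first proof: the global Cauchy--Schwarz bound $|E_{\F_q}|\geq q^6/T$ is exactly Lemma \ref{one_over_C_bound}, your four contributions to $T$ (the full diagonal, the two mixed terms, and $\sum_s r_s\sigma_s$) are precisely the $\F_q$-points of the four top-dimensional components of the fiber product $V\bigl((t_1-t_1')(s-\widetilde{L}),\,(t_2-t_2')(s-\widetilde{M})\bigr)$, and your ``fiber-product principle'' reducing irreducibility of $V(\widetilde{L}(t_1,t_1')-\widetilde{M}(t_2,t_2'))$ to indecomposability of one of $\widetilde{L},\widetilde{M}$ is the role played by Schinzel's theorem in the paper. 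The separate treatment of the linearized case via permutation polynomials is also the paper's route (Lemma \ref{many_perturbations_of_lin_are_PP}), though you should make the count quantitative: the number of $s$ for which $L(t)+st$ fails to be a permutation polynomial is at most $\frac{q-1}{p-1}$, and it is exactly the requirement $2\cdot\frac{1}{p-1}\leq\frac12$ that forces $p\geq 5$; ``all but $O(q)$ values of $s$'' as written is vacuous.

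The genuine gap is in the dichotomy you set up. Your leading-form argument shows that $\widetilde{L}$ is indecomposable when $p\nmid\deg L$, since then $\frac{x^d-y^d}{x-y}$ is squarefree and cannot be a nontrivial power $\lambda_s^m$. But the complementary case is \emph{not} ``the linearized regime'': $p\mid\deg L$ does not imply $L$ is linearized (consider $L=x^p+x^2$ or $L=x^{2p}$ for $p\geq 3$). For such $L$ the leading form $\frac{(x^N-y^N)^{p^a}}{x-y}$ is far from squarefree, so your criterion says nothing, and $L$ is not additive, so the permutation-polynomial fallback does not apply either. Closing this is exactly the content of the paper's Lemma \ref{truly_poly_in_two_vars} (case $e=0$): if $\widetilde{f}=Q(\lambda)$ with $\deg Q\geq 2$, then $f$ is linearized. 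Its proof is not a one-line degree count --- it first forces $\deg f=p^a$ and $\lambda_s=(x-y)^s$ from the top homogeneous part, then runs a descending induction on the homogeneous components of $\lambda$ comparing multiplicities of the factor $x-y$, to conclude that every exponent occurring in $f$ is a power of $p$. You correctly flag this classification as ``the second, more delicate ingredient,'' but it is the crux of the proposition rather than a refinement, and as the proposal stands it is asserted, not proved.
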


A polynomial $f(x)\in\overline{\F_{p}}[x]$ is called linearized if it is of the form $f(x)=\sum a_i x^{p^i}+f(0).$ We also prove 
\begin{Prop}
Assume that $L(t_1,t_2)=L(t_2), M(t_1,t_2)=M(t_1)$ are polynomials over $\F_{q_0}$. If $L$ and $M$ are linearized polynomials, assume in addition that
$L'(0)M'(0)\neq 0.$ 
Then Conjecture \ref{main_conj} holds true. 
\label{mixed_vars_prop}
\end{Prop}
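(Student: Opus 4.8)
The plan is to slice by the first coordinate, which $\varphi$ preserves, so that $|E_{\F_q}|=\sum_{s\in\F_q}|\psi_s(\F_q^2)|$, where $\psi_s\colon\F_q^2\to\F_q^2$ is given by $\psi_s(t_1,t_2)=(st_1+L(t_2),\,st_2+M(t_1))$. For each fixed $s$ I would bound the image from below by Cauchy--Schwarz applied to the fibre sizes: since $\sum_w\#\psi_s^{-1}(w)=q^2$, one gets $|\psi_s(\F_q^2)|\ge q^4/|Z_s(\F_q)|$, where $Z_s\subset\A^4$ is the collision variety cut out by $st_1+L(t_2)=st_1'+L(t_2')$ and $st_2+M(t_1)=st_2'+M(t_1')$. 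By Lang--Weil, $|Z_s(\F_q)|=c_sq^2+O(q^{3/2})$ with error uniform in $s$, where $c_s$ is the number of two-dimensional geometrically irreducible components of $Z_s$ defined over $\F_q$. Everything thus reduces to controlling $c_s$: a bound $c_s\le 4$ away from a finite bad set yields $|\psi_s(\F_q^2)|\ge q^2/4-O(q^{3/2})$ for almost all $s$, hence $|E_{\F_q}|\ge q^3/4-O(q^{5/2})$, after omitting the slice $s=0$ and the finitely many exceptional $s$, which diminishes the bound by only $O(q^2)$.

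To analyze $Z_s$, I would introduce the divided differences $D_L(t_2,t_2')=(L(t_2)-L(t_2'))/(t_2-t_2')$ and $D_M(t_1,t_1')=(M(t_1)-M(t_1'))/(t_1-t_1')$ and set $a=t_1-t_1'$, $b=t_2-t_2'$. The equations become $sa+bD_L=0$ and $sb+aD_M=0$, from which $a(s^2-D_LD_M)=b(s^2-D_LD_M)=0$. Hence for $s\neq0$ the variety splits as $Z_s=\Delta\cup Z_s^{(B)}$, where $\Delta$ is the diagonal and $Z_s^{(B)}=\{D_LD_M=s^2,\ sa+bD_L=0\}$ records the genuine collisions; on $Z_s^{(B)}$ one finds the symmetric relation $(t_2-t_2')(L(t_2)-L(t_2'))=(t_1-t_1')(M(t_1)-M(t_1'))$. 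Thus $c_s=1+(\text{number of }\F_q\text{-rational top components of }Z_s^{(B)})$, and the whole problem becomes one of counting irreducible components of $Z_s^{(B)}$.

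The main claim I would establish is a dichotomy: \emph{$Z_s^{(B)}$ is geometrically irreducible for all but finitely many $s$ unless both $L$ and $M$ are linearized.} This is precisely where the irreducibility results for multivariate polynomials quoted above would enter, applied to the separated-variable relation $(t_1-t_1')(M(t_1)-M(t_1'))=(t_2-t_2')(L(t_2)-L(t_2'))$ defining $Z_s^{(B)}$; the exceptional degeneration is exactly the linearized one, where $L(t_2)-L(t_2')=\tilde L(b)$ and $M(t_1)-M(t_1')=\tilde M(a)$ depend only on the differences, so that $\psi_s$ becomes $\F_p$-linear and $Z_s^{(B)}$ breaks up into a union of $2$-planes. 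Granting this, in the non-linearized case $c_s=2$ for almost all $s$, which even gives the stronger bound $|E_{\F_q}|\ge q^3/2-O(q^{5/2})$.

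It remains to treat the linearized case, which I expect to be the main obstacle. Here $\psi_s$ is affine $\F_p$-linear, so all its fibres have size $|\ker\psi_s|$ (the kernel of its linear part) and $|\psi_s(\F_q^2)|=q^2/|\ker\psi_s|$ exactly, whence $|E_{\F_q}|=q^2\sum_s 1/|\ker\psi_s|$. Since individual kernels can be large for special $s$, the per-slice bound fails and I would instead Cauchy--Schwarz over $s$: $|E_{\F_q}|\ge q^4/\sum_s|\ker\psi_s|=q^4/|K(\F_q)|$, where $K=\{st_1+\tilde L(t_2)=0,\ st_2+\tilde M(t_1)=0\}\subset\A^3$. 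By Lang--Weil $|K(\F_q)|=Cq+O(q^{1/2})$ with $C$ the number of $\F_q$-rational one-dimensional components of $K$; away from the line $t_1=t_2=0$ these are governed by the separated-variable curve $\{t_1\tilde M(t_1)=t_2\tilde L(t_2)\}$. The crux is to show this curve has at most three components, so that $C\le4$ and $|E_{\F_q}|\ge q^3/4-O(q^{5/2})$. This is exactly where $L'(0)M'(0)\neq0$ is needed: it forces $t_1\tilde M(t_1)$ and $t_2\tilde L(t_2)$ to have nonzero quadratic leading terms, keeping the number of factors of $t_1\tilde M(t_1)-t_2\tilde L(t_2)$ small, whereas when $L'(0)M'(0)=0$ this polynomial can acquire many factors (for instance $t_1^{p+1}-t_2^{p+1}$), the kernel grows for every $s$, and the bound genuinely fails. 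Pinning down the exact factor count of this linearized separated-variable polynomial, and hence the sharp value of $C$, is the step I anticipate will require the most care.
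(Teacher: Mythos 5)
Your linearized branch is correct and is, in substance, the paper's argument in different packaging. The paper takes the full fiber product $X\subset\A^5_{s,t_1,t_1',t_2,t_2'}$ of $\varphi$ with itself and applies Cauchy--Schwarz/Lang--Weil once to it (Lemma \ref{one_over_C_bound}); Lemma \ref{most_basic_lem_mixed_vars} then says that, besides the diagonal, the top components of $X$ are in bijection with those of $V\bigl((t_2-t_2')^2\widetilde{L}-(t_1-t_1')^2\widetilde{M}\bigr)$, and in the linearized case (WLOG $L(0)=M(0)=0$) a change of variables transports the factor count to $xL(x)-yM(y)$, which Zieve's theorem bounds by $3$, giving at most $4$ components and the $q^3/4$ bound. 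Your route via $|E_{\F_q}|=q^2\sum_s 1/|\ker\psi_s|$, Cauchy--Schwarz over $s$, and Lang--Weil on the kernel scheme $K\subset\A^3$ lands on exactly the same quantity: in the linearized case the equations of $X$ depend only on $(s,\,t_1-t_1',\,t_2-t_2')$, so $X$ is an $\A^2$-bundle over your $K$, $|X(\F_q)|=q^2|K(\F_q)|$, and the paper's bound $q^6/|X(\F_q)|$ coincides with your $q^4/|K(\F_q)|$; and your curve $\{t_1M(t_1)=t_2L(t_2)\}$ is the same separated-variables polynomial to which Zieve is applied (you should still check $\dim K=1$, i.e., that $K$ has no $2$-dimensional component, which is easy, and note that the isolation of the linearized case does come from Schinzel plus the $e=2$ case of Lemma \ref{truly_poly_in_two_vars}, as you anticipate).

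The genuine gap is in your non-linearized branch. Your dichotomy --- ``$Z_s^{(B)}$ is geometrically irreducible for all but finitely many $s$ unless both $L,M$ are linearized'' --- does not follow from the irreducibility results you invoke. Schinzel's theorem (combined with Lemma \ref{truly_poly_in_two_vars}, $e=2$) gives irreducibility of the \emph{total-space} polynomial $(t_2-t_2')^2\widetilde{L}-(t_1-t_1')^2\widetilde{M}$ in four variables; but irreducibility of the total space does not imply irreducibility of almost all fibers of the map to the $s$-line. By Stein factorization, a general fiber of $\bigcup_s Z_s^{(B)}\to\A^1_s$ (where $s=-(t_2-t_2')\widetilde{L}/(t_1-t_1')$) has a constant number $c$ of geometric components equal to the degree of the algebraic closure of $k(s)$ in the function field of the total space, and nothing forces $c=1$: indeed $s^2=\widetilde{L}\widetilde{M}$ holds on $Z_s^{(B)}$ and the involution $t_1\leftrightarrow t_1'$ interchanges $Z_s^{(B)}$ and $Z_{-s}^{(B)}$, so nontrivial cover structure over the $s$-line is already visible. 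So your per-slice component bound $c_s\le 4$ needs its own argument; the paper avoids the issue entirely by never slicing --- it needs only the total-space count, via Lemmas \ref{most_basic_lem_mixed_vars} and \ref{one_over_C_bound}, and this is precisely what makes the global fiber-product formulation the right one here. One further correction: your closing claim that when $L'(0)M'(0)=0$ ``the bound genuinely fails'' is an overstatement --- what fails is this method's component count (your $t_1^{p+1}-t_2^{p+1}$ example is apt), not the conclusion of Conjecture \ref{main_conj}: the paper's lemma on the case $L(t_1,t_2)=f(t_2)$, $M(t_1,t_2)=f(t_1)$ with $f$ linearized (e.g. $f=x^p$, which has $f'(0)=0$) still gives $|E_{\F_q}|\ge \frac{p-3}{p-1}q^3\ge q^3/2$ for $p\ge 5$ by a direct permutation-polynomial argument.
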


A bound 
with error term of this form is what we may hope for, using geometric tools. 
It is reasonable to think that the special cases that we have resolved are in fact ``the worst" cases for the conjecture, hence provide sufficient evidence. 
We remark that the smallest known Kakeya subset of $\F_q^3$ has size of order $\frac{q^3}{4},$ and the best known lower bound
is for order of $\frac{q^3}{8}.$ Thus, our approach and conjecture would give some evidence that indeed, $\frac{q^3}{4}$ is the order of the smallest Kakeya subset of $\F_q^3$.  

Our method is based on the Cauchy--Schwarz inequality and the Lang--Weil bound, and is inspired by the following easy combinatorial proof of the $2$-dimensional finite field Kakeya problem, known as Davies's approach. Namely, let $E\subset\F_q^2$ be a Kakeya subset. Pick lines $L_1,...,L_{q+1}$ contained in $E$, one in each direction. 
Let $I=\{(p,i)\ |\ p\in L_i\}$. Consider the fiber product diagram

\[
\xymatrix{
  & I\times_E I\makebox[0pt][l]{${}=\{(p,i,j)\mid p\in L_i, p\in L_j\}$} \ar[dl] \ar[dr] \\
  I\ar[rd] && I \ar[dl]\\
  & E
}\hspace{15em}
\]
A lower bound for $I\times_E I$ is given by the Cauchy--Schwarz inequality, and an upper bound follows by splitting the cases $i=j$ (diagonal) and $i\neq j$. Neglecting error terms of smaller order,
\[\frac{q^4}{|E|}=
\frac{|I|^2}{|E|}\leq |I\times_E I|\leq q^2+q^2\quad
\Longrightarrow \quad |E|\geq\frac{q^2}{2}.\]

We give an algebraic geometry version of this argument. 
It is interesting to note that it is this combinatorial proof (rather than Dvir's polynomial method) that interacts best with our algebraic geometry Kakeya problem.

\section{Definition of a Kakeya variety}
\label{maindef_section}

\subsection{Some technical preparations}

Fix a base field $k_0$, a variety $E$ over $k_0,$ and a morphism $E\to\P^n_{k_0}$ defined over $k_0.$ In this discussion, variety over $k_0$ means just a scheme of finite type over $k_0$. 

By Theorem 5.23 in \cite{F},
there exists a scheme $\mathfrak{M}or_{k_{0}}(\P^1_{k_{0}},E)$ such that for any variety $T$ over $k_{0},$ the set
$\mathfrak{M}or_{k_{0}}(\P^1_{k_{0}},E)(T)$ consists of all $T$-morphisms 
$\P^1_{T}\to E_T$, where $E_T=E\times_{k_{0}}T$. Similarly, let 
$\mathfrak{M}or_{k_{0}}(\P^1_{k_{0}},\P^n_{k_{0}})$ be the scheme whose $T$-points, for a scheme $T/k_{0}$, 
are the $T$-morphisms $\P^1_T\to \P^n_T$. Note that 
the given morphism $E\to\P^n_{k_{0}}$ induces
$\mathfrak{M}or_{k_{0}}(\P^1_{k_{0}},E)\to \mathfrak{M}or_{k_{0}}(\P^1_{k_{0}},\P^n_{k_{0}})$.

Next, we define a scheme $\text{Lin}_{k_0}(\P^1_{k_0},\P^n_{k_0})$ which parametrizes morphisms $\P^1\to\P^n$ whose images are lines, as 
\[\text{Lin}_{k_0}(\P^1_{k_0},\P^n_{k_0})=\bigcup_{i\neq j}D_+(z_iy_j-z_jy_i)\subset\P^{2n+1}_{[z_0:y_0:...:z_n:y_n]},\]
with the induced open subscheme structure
(for a homogeneous $f\in k_0[z_0,y_0,...,z_n,y_n]$, we denote by $D_+(f)$ the locus of invertibility of $f$). Note that $\text{Lin}_{k_0}(\P^1_{k_0},\P^n_{k_0})$ is a variety over $k_0.$

Before we state the Lemma below, note that if $K$ is a field, and $K[x_0,...,x_n]\onto K[u,v], x_i\mapsto \alpha_i u+\beta_i v$
is a surjection of $K$-algebras, then the induced map $\P^1_K\into\P^n_K$ gives rise to a line if and only if for some $i\neq j,$ we have $\alpha_i\beta_j-\alpha_j\beta_i\neq 0$. 

\begin{Lem}
There is a morphism 
\[\text{Lin}_{k_0}(\P^1_{k_0},\P^n_{k_0})\to \mathfrak{M}or_{k_{0}}(\P^1_{k_{0}},\P^n_{k_{0}})\]
over $k_0$ such that for any field $K/k_0,$ the induced map on $K$-points sends 
$[\alpha_0:\beta_0:\dots:\alpha_n:\beta_n]\in\text{Lin}_{k_0}(\P^1_{k_0},\P^n_{k_0})(K)$ to the $K$-morphism
$\P^1_K\to\P^n_K$ given by $[u:v]\mapsto [...:\alpha_i u+\beta_i v:\dots].$

In particular, a $K$-morphism $\P^1_K\to\P^n_K$, regarded as an element in
$\mathfrak{M}or_{k_{0}}(\P^1_{k_{0}},\P^n_{k_{0}})(K)$,
 determines a line if and only if it comes from
$\text{Lin}_{k_0}(\P^1_{k_0},\P^n_{k_0})(K)$. 
\end{Lem}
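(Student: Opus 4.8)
The plan is to invoke the universal property of the Hom-scheme $\mathfrak{M}or_{k_0}(\P^1_{k_0},\P^n_{k_0})$. Since this scheme represents the functor $T\mapsto\{T\text{-morphisms }\P^1_T\to\P^n_T\}$, giving a $k_0$-morphism $\text{Lin}_{k_0}(\P^1_{k_0},\P^n_{k_0})\to\mathfrak{M}or_{k_0}(\P^1_{k_0},\P^n_{k_0})$ is equivalent, by Yoneda, to producing a single $S$-morphism $\Phi:\P^1_S\to\P^n_S$, where $S:=\text{Lin}_{k_0}(\P^1_{k_0},\P^n_{k_0})$. This universal $\Phi$ is the object I will construct; the map on $K$-points asserted in the statement is then read off by base change along a $K$-point $\Spec K\to S$.

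To build $\Phi$, write $p_1:\P^1_S\to\P^1_{k_0}$ and $p_2:\P^1_S\to S$ for the two projections, let $u,v\in H^0(\P^1,\O(1))$ denote the homogeneous coordinates on $\P^1$, and let $z_0,y_0,\dots,z_n,y_n$ be the homogeneous coordinates on $\P^{2n+1}$ restricted to $S$, so that each $z_i,y_i$ is a section of $\O_S(1)$ (the restriction of $\O_{\P^{2n+1}}(1)$). On the line bundle $\L:=p_1^*\O_{\P^1}(1)\otimes p_2^*\O_S(1)$ on $\P^1_S$, consider the $n+1$ sections $s_i:=p_1^*u\cdot p_2^*z_i+p_1^*v\cdot p_2^*y_i$ for $i=0,\dots,n$. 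A collection of such sections generating $\L$ determines an $S$-morphism $\P^1_S\to\P^n_S$, namely $[u:v]\mapsto[\dots:s_i:\dots]$, and I take $\Phi$ to be this morphism.

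The crux is to verify that $s_0,\dots,s_n$ have no common zero, so that they indeed generate $\L$ and $\Phi$ is defined everywhere. At a geometric point lying over $s\in S$ with fiber coordinate $[u:v]$, simultaneous vanishing of all $s_i$ would force $u\cdot(z_0(s),\dots,z_n(s))+v\cdot(y_0(s),\dots,y_n(s))=0$. But $s\in S$ means $z_i(s)y_j(s)-z_j(s)y_i(s)\neq 0$ for some $i\neq j$, i.e.\ the two vectors $(z_i(s))_i$ and $(y_i(s))_i$ are linearly independent; since $[u:v]\in\P^1$ is never $[0:0]$, no nontrivial combination of these vectors can vanish, a contradiction. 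This base-point-freeness is the one genuinely geometric input and the step I expect to require the most care, chiefly the bookkeeping of $\L$ as a twist, since the $z_i,y_i$ are sections of $\O_S(1)$ rather than honest functions. An equivalent alternative, should the twist become awkward, is to define $\Phi$ chart by chart over the affine cover $\{D_+(z_iy_j-z_jy_i)\}$ of $S$ and the standard affines of $\P^1$, and then check that the local formulas glue.

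Finally, to obtain the asserted behaviour on $K$-points, I pull $\Phi$ back along a point $[\alpha_0:\beta_0:\dots:\alpha_n:\beta_n]\in S(K)$: the twist $p_2^*\O_S(1)$ becomes trivial on $\P^1_K$, each $z_i,y_i$ specializes to the scalar $\alpha_i,\beta_i$, and $s_i$ becomes $\alpha_i u+\beta_i v$, so $\Phi_K$ is exactly $[u:v]\mapsto[\dots:\alpha_i u+\beta_i v:\dots]$, well-defined independently of the common scaling of the $\alpha_i,\beta_i$. For the ``in particular'' clause, I appeal to the remark preceding the Lemma: a $K$-morphism $\P^1_K\to\P^n_K$ given by linear forms has image a line exactly when some $\alpha_i\beta_j-\alpha_j\beta_i\neq 0$, which is precisely the condition defining $S(K)$; hence the $K$-morphisms determining lines are exactly those in the image of $S(K)$.
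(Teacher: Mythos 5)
Your proof is correct, and it arrives at the same construction as the paper by a dual route. The paper works directly with the functor of points: for affine $S=\Spec R$ it reduces to the case where the line bundle underlying the $\P^{2n+1}$-point is trivial, so the point is a tuple $(\alpha_0,\beta_0,\dots,\alpha_n,\beta_n)$ whose $2\times 2$ minors generate the unit ideal, and then it proves that the graded ring map $R[x_0,\dots,x_n]\to R[u,v]$, $x_i\mapsto\alpha_iu+\beta_iv$, is surjective by exhibiting $u$ and $v$ explicitly as combinations of the $\alpha_iu+\beta_iv$ via a partition of unity $\sum r_{ij}(\alpha_i\beta_j-\alpha_j\beta_i)=1$. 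You instead build the single universal morphism over $S=\text{Lin}_{k_0}(\P^1_{k_0},\P^n_{k_0})$ from the sections $s_i$ of the twist $p_1^*\O_{\P^1}(1)\otimes p_2^*\O_S(1)$ and check base-point-freeness at geometric points; since the common zero locus is a closed subscheme of a finite-type $k_0$-scheme, the pointwise check does suffice (by Nakayama at each point), and Yoneda then converts the universal family into the morphism of schemes. The two verifications carry the same content --- surjectivity of the graded ring map in degree one is exactly generation of the relevant line bundle by the $s_i$ --- but yours is pointwise-geometric while the paper's is an explicit identity valid over an arbitrary base ring with no reference to points, which also makes the affine-local gluing transparent; yours buys the avoidance of the affine reduction and of trivializing the line bundle. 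One shared looseness worth noting: in the ``in particular'' clause, both you and the paper read ``determines a line'' as ``is given by linear forms with some nonvanishing minor''; a morphism such as $[u:v]\mapsto[u^2:v^2:0:\dots:0]$ has a line as its set-theoretic image but does not come from $\text{Lin}_{k_0}(\P^1_{k_0},\P^n_{k_0})(K)$, so the clause must be understood with that convention. This is inherited from the statement, not a gap in your argument.
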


\begin{proof}
 It suffices to describe this map on $S$-points, where $S=\Spec R$ is affine. 
Let $(\L,\L\into\O_S^{2n+2})$ be a point in the set $\text{Lin}_{k_0}(\P^1_{k_0},\P^n_{k_0})(S)\subset\P^{2n+1}(S),$
where $\L$ is a line bundle on $S$, and $\L\into\O_S^{2n+2}$ has locally free cokernel. We have to describe how it gives rise to a morphism $\P^1_S\to\P^n_S.$ Take an affine open cover $S=\cup S_i$ such that $\L_{S_i}$ is trivial for each $i$; it suffices to describe the maps $\P^1_{S_i}\to\P^n_{S_i}$ for each $i$, and hence, replacing $S$ by $S_i$, we can assume that $\L\simeq\O_S$ is trivial on $S$. Thus, we are given
\begin{align*}
R &\hookrightarrow R^{2n+2}\\
1 &\mapsto (\alpha_0,\beta_0,...,\alpha_n,\beta_n)
\end{align*}  
such that $\alpha_0,...,\beta_n$ generate the unit ideal in $R$, and the condition that $S\to\P^{2n+1}$ factors
through $\text{Lin}_{k_0}(\P^1_{k_0},\P^n_{k_0})$ means that the ideal in $R$ generated by 
$\alpha_i\beta_j-\alpha_j\beta_i$ is the unit ideal. 

We claim that in this setting, the $R$-algebra map
\begin{align*}
R[x_0,...,x_n] &\to R[u,v]\\
x_i &\mapsto \alpha_i u+\beta_i v
\end{align*}
is surjective, hence induces a morphism $\P^1_R\to\P^n_R$. Say $r_{ij}\in R$ (for each $i<j$) are such that
$\sum_{i<j}r_{ij}(\alpha_i\beta_j-\alpha_j\beta_i)=1$. For each $i<j$, note that
\[r_{ij}(\alpha_i\beta_j-\alpha_j\beta_i)u=r_{ij}\beta_j(\alpha_i u+\beta_i v)-r_{ij}\beta_i(\alpha_j u+\beta_j v)\]
belongs to the image of the map above; summing over all $i<j$ shows that $u$ belongs to the image, and similarly for $v$. 

The description of the map on $K$-points follows directly from the construction.
\end{proof}

Let $H_0=V(x_0)\subset\P^n_{k_0}$, and consider also 
$\text{Lin}_{k_0}(\P^1_{k_0},H_0):=\text{Lin}_{k_0}(\P^1_{k_0},\P^n_{k_0})\cap V(z_0,y_0)$;
this scheme parametrizes now morphisms $\P^1\to\P^n$ which give rise to lines contained in the
hyperplane $V(x_0)$. Define
\[\text{Lin}^0_{k_0}(\P^1_{k_0},\P^n_{k_0}):=\text{Lin}_{k_0}(\P^1_{k_0},\P^n_{k_0})-
\text{Lin}_{k_0}(\P^1_{k_0},H_0).\]
This scheme parametrizes morphisms $\P^1\to\P^n$ which give rise to lines not contained in $H_0.$

Next, there is a morphism $\text{Lin}_{k_0}^0(\P^1_{k_0},\P^n_{k_0})\to V(x_0)$ which takes a line
not contained in $V(x_0)$
and sends it to its intersection with the hyperplane $V(x_0)$. More formally, 

\begin{Lem}
There is a morphism
\[\text{Lin}_{k_0}^0(\P^1_{k_0},\P^n_{k_0})\to V(x_0)\]
over $k_0$ such that for any field $K/k_0,$ the induced map 
$\text{Lin}_{k_0}^0(\P^1_{k_0},\P^n_{k_0})(K)\to V(x_0)(K)$ is described as follows: a 
$K$-morphism $\P^1_K\to\P^n_K$ is sent to the unique point in the image of $\P^1_K(K)\to\P^n_K(K)$ which belongs to $V(x_0)(K)$. 
\end{Lem}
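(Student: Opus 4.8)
The plan is to write down the intersection point explicitly in the homogeneous coordinates $[\alpha_0:\beta_0:\dots:\alpha_n:\beta_n]$ and then verify that the resulting formula defines a morphism on all of $\text{Lin}^0$. Over a field $K$, a point of $\text{Lin}^0_{k_0}(\P^1_{k_0},\P^n_{k_0})(K)$ is the morphism $[u:v]\mapsto[\alpha_0 u+\beta_0 v:\dots:\alpha_n u+\beta_n v]$, and the defining condition of $\text{Lin}^0$ (that the line is not contained in $H_0=V(x_0)$) says precisely that $(\alpha_0,\beta_0)\neq(0,0)$, so that the pullback $\alpha_0 u+\beta_0 v$ of the coordinate $x_0$ is a nonzero linear form and therefore vanishes at the single point $[u:v]=[-\beta_0:\alpha_0]$ of $\P^1$. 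Substituting this point, the $x_i$-coordinate of the image becomes $\alpha_0\beta_i-\alpha_i\beta_0$; in particular the $x_0$-coordinate is $0$, confirming that the image lies in $V(x_0)$. I am thus led to define the map by the $n$ quadratic forms
\[
g_i=\alpha_0\beta_i-\alpha_i\beta_0,\qquad i=1,\dots,n,
\]
valued in $V(x_0)\simeq\P^{n-1}$ (with coordinates $x_1,\dots,x_n$).

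A priori these forms give only a rational map $\P^{2n+1}\dashrightarrow V(x_0)$, so the content of the lemma is that it restricts to an honest morphism on $\text{Lin}^0$. Following the pattern of the previous lemma, I would verify this on $S$-points for affine $S=\Spec R$: after passing to an open cover trivializing the line bundle $\L$, a point of $\text{Lin}^0(S)$ is a tuple $(\alpha_0,\beta_0,\dots,\alpha_n,\beta_n)\in R^{2n+2}$ for which the minors $\alpha_i\beta_j-\alpha_j\beta_i$ generate the unit ideal (the $\text{Lin}$ condition) and for which $(\alpha_0,\beta_0)$ generate the unit ideal (the condition defining $\text{Lin}^0$, i.e. the removal of $V(z_0,y_0)$). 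The key claim to establish is then that $g_1,\dots,g_n$ generate the unit ideal in $R$; granting this, $(g_1,\dots,g_n)$ determines a map $S\to V(x_0)$ (globally the $g_i$ are sections of $\L^{-2}$, and their generating the unit ideal makes $\L^{2}\into\O_S^n$ a subbundle inclusion with locally free cokernel), and since everything is given by global formulas these maps glue to the desired morphism over $k_0$.

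The heart of the argument, and the one step that uses both hypotheses, is the claim that the $g_i$ generate the unit ideal, equivalently that they have no common zero on $\Spec R$. At a point $\mathfrak p$ I pass to the residue field $\kappa=\kappa(\mathfrak p)$: the $\text{Lin}$ condition forces the two vectors $\bar\alpha=(\bar\alpha_0,\dots,\bar\alpha_n)$ and $\bar\beta=(\bar\beta_0,\dots,\bar\beta_n)$ to be linearly independent over $\kappa$, while the $\text{Lin}^0$ condition forces $(\bar\alpha_0,\bar\beta_0)\neq(0,0)$. If all $\bar g_i$ vanished, then together with the trivial identity $\bar g_0=0$ I would have $\bar\alpha_0\,\bar\beta=\bar\beta_0\,\bar\alpha$ in $\kappa^{n+1}$; since one of $\bar\alpha_0,\bar\beta_0$ is nonzero, this makes $\bar\alpha$ and $\bar\beta$ proportional, contradicting their independence. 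Hence the $g_i$ never vanish simultaneously.

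Finally, the description on $K$-points is immediate from the construction: over a field the unique point of $\P^1_K(K)$ mapping into $V(x_0)$ is $[-\beta_0:\alpha_0]$, and its image is $[0:g_1:\dots:g_n]$, as required. The only mild obstacle I anticipate is the bookkeeping of the line bundle $\L$ and the check that $(g_1,\dots,g_n)$ defines a subbundle inclusion over a general base $S$ rather than merely over fields; but this is routine once the unit-ideal claim is in hand, exactly as in the surjectivity argument carried out in the preceding lemma.
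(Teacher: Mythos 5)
Your proposal is correct and follows essentially the same route as the paper: both define the image point by the quadratic forms $g_i=\alpha_0\beta_i-\alpha_i\beta_0$ and reduce everything to showing these generate the unit ideal. The only difference is in that verification — you argue pointwise on residue fields (no common zero because $\bar\alpha,\bar\beta$ would become proportional), whereas the paper exhibits the explicit identity $\alpha_0(\alpha_j\beta_i-\alpha_i\beta_j)=\alpha_j g_i-\alpha_i g_j$ showing $I_1I_2\subset\langle g_1,\dots,g_n\rangle$; both are valid.
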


\begin{proof}
Let $S=\Spec R$ be an affine scheme over $k_0$. We have to describe the map of sets
$\text{Lin}_{k_0}^0(\P^1_{k_0},\P^n_{k_0})(S)\to V(x_0)(S).$ Let $(\L,\L\into\O_S^{2n+2})$ be an element of  $\text{Lin}_{k_0}^0(\P^1_{k_0},\P^n_{k_0})(S)$, where $\L$ is a line bundle on $S$, and
$\L\into\O_S^{2n+2}$ is an injection with a locally free cokernel. We have to associate to it a morphism $S\to V(x_0).$ Take an affine open cover $S=\cup S_i$ with $\L_{S_i}\simeq\O_{S_i}$;
it suffices to describe the maps $S_i\to V(x_0)$. Replacing $S$ by $S_i,$ we can assume that
$\L\simeq\O_S$ is trivial. 

So, we are given an injection of $R$-modules 
$R\into R^{2n+2}, 1\mapsto (\alpha_0,\beta_0,...,\alpha_n,\beta_n)$ with a locally free cokernel.
 We know that this map $\Spec R\to\P^{2n+1}_{k_0}$ factors through 
\[\text{Lin}_{k_0}^0(\P^1_{k_0},\P^n_{k_0})=\text{Lin}_{k_0}(\P^1_{k_0},\P^n_{k_0})\cap
(D_+(z_0)\cup D_+(y_0)).\]
This means that the ideals $I_1=\langle \alpha_i\beta_j-\alpha_j\beta_i\ |\ i\neq j\rangle$ and
$I_2=\langle \alpha_0,\beta_0\rangle$ of $R$ are both equal to the unit ideal $R$.

For $i=1,...,n,$ define $x_i=-\alpha_i\beta_0+\beta_i\alpha_0\in R$. We claim that the ideal
$I=\langle x_i\rangle\subset R$ is the unit ideal. Note that for any $i\neq j,$
$\alpha_0(\alpha_j\beta_i-\alpha_i\beta_j)=\alpha_j x_i-\alpha_i x_j\in I$ and similarly 
$\beta_0(\alpha_j\beta_i-\alpha_i\beta_j)\in I$. Thus, $R=I_1I_2\subset I\subset R$ and hence $I=R$. 

Therefore, the $R$-module map $R\into R^{n+1}, 1\mapsto (0,x_1,...,x_n)$ is injective on all residue fields of $R$, hence gives rise to a morphism $\Spec R\to V(x_0)\into\P^n_{k_0}.$

When $S=\Spec K$ with $K$ a field, the description of the map in the statement of the Lemma follows
from the construction. 
\end{proof}

\subsection{Kakeya variety over a base field}

We now go back to the morphism $E\to\P^n_{k_0}$. Define $F(E)$ as the fiber product in the following diagram:
\[
\xymatrix{
\mathfrak{M}or_{k_{0}}(\P^1_{k_{0}},E) \ar[r]&  \mathfrak{M}or_{k_{0}}(\P^1_{k_{0}},\P^n_{k_0})\\
F(E)\ar[u]\ar[r] & \text{Lin}^0_{k_0}(\P^1_{k_0},\P^n_{k_0})   \ar[u]}
\]
In particular, $F(E)$ is a variety over $k_0$, and for a field $K/k_0,$ the set $F(E)(K)$ consists of all $K$-morphisms $\P^1_K\to E_K$ such that the composition $\P^1_K\to E_K\to \P^n_K$ gives rise to a {\it line} in $\P^n_K$ which is not contained in $V(x_0)$.

Let $k_{0}$ be any field. Consider a variety $E$ over $k_{0},$ together with a morphism
$E\to\P^n_{k_{0}}$ of varieties over $k_{0}.$ 
Take coordinates $[x_0:...:x_n]$ on $\P^n_{k_0},$ and consider the hyperplane $H_0=V(x_0)$. For an open $U\subset V(x_0)$, let $F(E,U)$ 
be the preimage of $U$ in $F(E)$ under 
$F(E)\to \text{Lin}^0_{k_0}(\P^1_{k_0},\P^n_{k_0})\to V(x_0)$.

\begin{Def}
We say that $(E,E\to\P^n_{k_0})$ is a Kakeya variety over $k_0$ if there exists a nonempty open $U\subset\P^n_{k_0}$ such that the morphism $F(E,U)\to U$ has a section. 
\end{Def}
%

\[
\xymatrix{
\mathfrak{M}or_{k_{0}}(\P^1_{k_{0}},E) \ar[r]&  \mathfrak{M}or_{k_{0}}(\P^1_{k_{0}},\P^n_{k_0}) &{}\\
F(E)\ar[r]\ar[u] & \text{Lin}^0_{k_0}(\P^1_{k_0},\P^n_{k_0}) \ar[r] \ar[u]& V(x_0)\\
F(E,U)
\ar@{^{(}->}[u]
\ar[rr] &{}&U\ar@{^{(}->}[u]\ar@{.>}@/_1pc/[ll]
}
\]

\begin{Rem}
If $k_0=\F_{q_0}$ is a finite field and $\dim F(E)=n-1$,
 we may instead impose the requirement that 
 for some open $U\subset V(x_0)$, the morphism
$F(E,U)\to U$ is separable, and
for some irreducible component $Z$ of $F(E)$, the map $Z(K)\to U(K)$ is surjective, for any finite field $K/\F_{q_0}$.  It is known that this implies that $F(E,U)\to U$ is birational, hence $E$ will be Kakeya. 
\end{Rem}

\begin{Exa}
Let $k_0$ be any field (suppose $\text{char}k_0\neq 2$ for convenience; a small modification is needed in characteristic $2$).
 Let $E=V(a_1x_0+b^2-c_1^2,...,a_{n-1}x_0+b^2-c_{n-1}^2)\subset\P^{2n-1}_{[x_0:a_1:...:a_{n-1}:b:c_1:...:c_{n-1}]}$ and consider the map $E\to\P^n_{[x_0:a_1:...:a_{n-1}:b]}$
induced by projection onto the first $n+1$ coordinates. Take $U=D_+(b)\subset V(x_0)\subset\P^n_{[x_0:a_1:...:a_{n-1}:b]}$, with $U\simeq \A^{n-1}_{\alpha_1,...,\alpha_{n-1}}$. For $S=\Spec R$,
the map $U(S)\to \{\text{$S$-morphisms}\ \P^1_S\to E_S\}$ is described as follows. An element $(\alpha_1,...,\alpha_{n-1})\in R^{n-1}$ induces a surjection
\begin{align*}
R[x_0,a_1,...,a_{n-1},b,c_1,...,c_{n-1}]/\langle a_i w+b^2-c_i^2\rangle &\longto R[t,t_1]\\
x_0 &\longmapsto t_1\\
a_i&\longmapsto \alpha_i t+\frac{\alpha_i^2}{4}t_1\\
b &\longmapsto t\\
c_i &\longmapsto t+\frac{\alpha_i}{2}t_1
\end{align*}
of $R$-algebras, which in turn gives rise to $\P^1_R\to E_R$.

The smallest known example of a Kakeya subset of $\F_q^n$ arises from this Kakeya variety when $k_0=\F_p$.
\end{Exa}

\begin{Exa}
If we start with the Grassmanian $\G(1,4)$, embedded in $\P^9$ under the Plucker embedding, and
cut it with an appropriate $6$-dimensional linear subspace, we obtain
\[E=V(x_0 z-xy,bz-cy,az-cx_0+ax,ay-bx_0+ax_0,bx-cx_0)\subset\P^6_{[x_0\colon a\colon b\colon c\colon x\colon y\colon z]}.\]
Further, if we perform an appropriate linear projection, we obtain the degree-$5$
Kakeya variety  described by the diagram

%

\[
\xymatrix{
E
\ar@{^{(}->}[rr]\ar[rrdddd]& {}&\P^6_{[x_0:a:b:c:x:y:z]} \ar@{-->}[dddd] & [x_0:a:b:c:x:y:z]\ar@{|->}[dddd]\\
{}&{}&{}&{}\\
{}&{}&{}&{}\\
{}&{}&{}&{}\\
\P^1_{[t:t_1]}\ar[uuuu]^{ 
\begin{matrix}
x_0=t_1\\
a=\alpha^2 t\\
b=\alpha t\\
c=\alpha\gamma t\\
x=\gamma t_1\\
y=(1/\alpha-1)t_1\\
z=\gamma(1/\alpha-1)t_1
\end{matrix}
}
\ar[rr]^{\text{line in direction}}_{[\alpha:1:\gamma]}&{} &\P^3 & [x_0:a-x+y:b-z:c]\\
}
\]

where $U=\{[0:\alpha:1:\gamma]\in V(x_0)\ |\  \alpha\neq 0\}.$
This example arises from an investigation in \cite{Slavov_Kakeya_over_alg_closed}.
\label{Grass_example}
\end{Exa}

\subsection{An explicit description}

Now, let $\sigma:U\to F(E,U)$ be a section of the map $F(E,U)\to U$. Shrinking $U$ if necessary, we may assume that $U\subset V(x_0)\cap D_+(x_1)\simeq\A^{n-1}.$ In this case, the composition
$U\xrightarrow{\sigma}F(E,U)\to F(E)\to \text{Lin}^0(\P^1,\P^n)$ actually factors through 
$\text{Lin}^0(\P^1,\P^n)-\text{Lin}^0(\P^1,V(x_1))$. 
There is a map 
$\text{Lin}^0(\P^1,\P^n)-\text{Lin}^0(\P^1,V(x_1))\to V(x_1)$, and hence we obtain a map $U\to V(x_1).$ 
In fact, the map will factor through $V(x_1)\cap D_+(x_0)\simeq\A^{n-1}.$ Regard $U\subset\A^{n-1}$, and let this map $U\to V(x_1)\cap D_+(x_0)$ be given explicitly by 
\begin{align*}
U &\longmapsto\A^{n-1}\\
(u_2,...,u_n) &\longmapsto (\varphi_2(u_2,...,u_n),...,\varphi_n(u_2,...,u_n)).
\end{align*}

Note that if $U$ is properly contained in $V(x_0)\cap D_+(x_1)\simeq \A^{n-1}$, then $\varphi_2,...,\varphi_n$ will be rational functions and may have denominators; for example, if $U= D(g)\subset\A^{n-1}$ is a basic open, then each $\varphi_i\in k_0[x_2,...,x_n]_g$. This happens for instance in the situation of Example \ref{Grass_example}.

Let $K/k_0$ be any field. Then for any $[0:1:u_2:...:u_n]\in U$, the line joining 
$[0:1:u_2:...:u_n]$ and 
$[1:0:\varphi_2(u_2,...,u_n):...:\varphi_n(u_2,...,u_n)]$ is entirely contained in the image of $E(K)\to\P^n(K)$. Note that the intersection of this line with $D_+(x_0)$ is described as
\[\{ (s,su_2+\varphi_2(u_2,...,u_n),...,su_n+\varphi_n(u_2,...,u_n)\ |\ s\in K\}.\]

Say $k_0=\F_{q_0}$ and $K/k_0$ are finite, and we want to prove a lower bound for the size of the image of $E(K)\to\P^n(K)$. Well, instead of the original Kakeya variety $E\to\P^n$, we can consider the map
\begin{align*}
\A^1\times U &\longrightarrow\A^n\\
(s,u_2,...,u_n) &\longmapsto 
(s,su_2+\varphi_2(u_2,...,u_n),...,su_n+\varphi_n(u_2,...,u_n))
\end{align*}
and now we have to give a lower bound for the size of its image on $\F_q$-points.  
Notice, by the way, that for sure, given any $U=D(g)\subset\A^{n-1},$ and given any regular functions
$\varphi_2,...,\varphi_n\in \F_q[x_2,...,x_n]_g$ on $U$, the image on $\F_q$-points of the map above is a Kakeya subset of $\F_q^n$, in the usual combinatorial classical sense (after adding some more $O(q^{n-1})$ points, of course, as usual). Thus, we have reduced the problem of giving a lower bound for the image of $E(\F_q)\to\P^n(\F_q)$ to a very explicit problem.

Focus on the case $U=V(x_0)\cap D_+(x_1).$ Changing notation slightly, now we have $n-1$ polynomials $L_1,...,L_n\in\F_{q_0}[t_1,...,t_{n-1}],$ and we consider the map
\begin{align*}
\varphi:\A^n_{\F_{q_0}}&\longrightarrow\A^n_{\F_{q_0}}\\
(s,t_1,...,t_{n-1})&\longmapsto
(s,st_1+L_1(t_1,...,t_{n-1}),...,st_{n-1}+L_{n-1}(t_1,...,t_{n-1})).
\end{align*}
This is the analogue of the map $I\to E$ from the combinatorial proof of the $2$-dimensional finite field Kakeya problem, 
discussed in the Introduction

The goal is to give a lower bound for the size of the image on $\F_q$-points. Since the case $n=3$ and 
$U=V(x_0)\cap D_+(x_1)$ is already sufficiently interesting and nontrivial, we focus on it in the next sections.

\section{Our approach}

Fix a finite field $\F_{q_0}$ and let $p$ be its characteristic.

\subsection{The main idea}

The main idea of our approach is the Lemma below, based on the Cauchy--Schwarz inequality and the Lang--Weil
estimate. This idea to use the combination of Cauchy--Schwarz and Lang--Weil to give a lower bound for the image set on $\F_q$-points goes back to \cite{U}.

\begin{Lem}
Let $f:X\to Y$ be a morphism of varieties over $F_{q_0},$ where $\dim X=\dim Y=k$ and $X$ is geometrically irreducible. Assume that the fiber product 
$X\times_Y X$ of the morphism $f$ with itself also has dimension $k$. 
Let $C$ be the number of top-dimensional
geometrically irreducible components of $X\times_Y X$. 
For each extension $\F_q/\F_{q_0},$ let $E_{F_q}$ be the image of the induced map $X(\F_q)\to Y(\F_q)$ on $\F_q$-points. Then
\[|E_{\F_q}|\geq\frac{1}{C}q^k-O(q^{k-\frac{1}{2}}),\]
where the implied constant depends only on the complexity of $X$, $Y$, and $f$.    
\label{one_over_C_bound}
\end{Lem}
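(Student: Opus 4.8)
The plan is to combine a Cauchy--Schwarz lower bound on the size of the fiber product with a Lang--Weil upper bound, exactly in the spirit of the Davies argument sketched in the Introduction. Write $I = X(\F_q)$, $E = E_{\F_q} = f(I) \subset Y(\F_q)$, and for each $y \in E$ let $n_y = |f^{-1}(y) \cap I|$ denote the number of $\F_q$-points of $X$ lying over $y$. Then $\sum_{y \in E} n_y = |I|$, and the $\F_q$-points of the fiber product satisfy
\[
|(X\times_Y X)(\F_q)| = \sum_{y\in E} n_y^2.
\]
By Cauchy--Schwarz,
\[
\left(\sum_{y\in E} n_y\right)^2 \leq |E|\cdot \sum_{y\in E} n_y^2,
\]
so that $|E_{\F_q}| \geq |X(\F_q)|^2 / |(X\times_Y X)(\F_q)|$. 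This is the entire structural content; everything else is controlling the two quantities on the right.

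Next I would apply the Lang--Weil estimate to both $X$ and $X\times_Y X$. Since $X$ is geometrically irreducible of dimension $k$, Lang--Weil gives $|X(\F_q)| = q^k + O(q^{k-1/2})$, so the numerator is $q^{2k} + O(q^{2k-1/2})$. For the denominator, the hypothesis $\dim(X\times_Y X) = k$ together with the assumption that it has exactly $C$ top-dimensional geometrically irreducible components yields, again by Lang--Weil applied to each component (and with lower-dimensional components and the intersections among components contributing only $O(q^{k-1})$), the bound
\[
|(X\times_Y X)(\F_q)| \leq C q^k + O(q^{k-1/2}).
\]
Here the implied constants depend only on the dimensions, degrees, and number of components, i.e.\ on the complexity of $X$, $Y$, $f$. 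Substituting both estimates and expanding,
\[
|E_{\F_q}| \geq \frac{q^{2k}+O(q^{2k-1/2})}{Cq^k+O(q^{k-1/2})}
= \frac{1}{C}q^k - O(q^{k-1/2}),
\]
after a routine geometric-series / Taylor expansion of the denominator, which is valid once $q$ is large relative to the complexity.

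The main obstacle, and the point that needs care rather than cleverness, is the upper bound on $|(X\times_Y X)(\F_q)|$ and its interaction with the complexity bookkeeping. The fiber product is typically reducible, and one must (i) know it has pure-ish dimension controlled so that only the $C$ top components matter, (ii) apply Lang--Weil on each geometrically irreducible top component over the appropriate field of definition, noting that components not defined over $\F_q$ contribute fewer points and components defined over $\F_{q_0}$ each contribute $q^k + O(q^{k-1/2})$, so summing over all $C$ gives the factor $C$, and (iii) absorb the overcounting from points lying on several components into the error term, which is legitimate since such loci have dimension $\leq k-1$. One should also verify that the diagonal $X \hookrightarrow X\times_Y X$, which is itself $k$-dimensional and geometrically irreducible, is counted among the $C$ components, so $C \geq 1$ automatically and the bound is nonvacuous. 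Finally, I would remark that the statement only claims an inequality, so it is harmless that $|E_{\F_q}|$ might genuinely be larger; we only need that $|(X\times_Y X)(\F_q)|$ is \emph{at most} $Cq^k$ to leading order, which the component count delivers.
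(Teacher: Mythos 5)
Your proposal is correct and follows essentially the same route as the paper's own proof: the Cauchy--Schwarz inequality applied to the Cartesian diagram of $\F_q$-points gives $|E_{\F_q}| \geq |X(\F_q)|^2 / |(X\times_Y X)(\F_q)|$, and Lang--Weil supplies $|X(\F_q)| = q^k + O(q^{k-1/2})$ together with the upper bound $|(X\times_Y X)(\F_q)| \leq Cq^k + O(q^{k-1/2})$ (with inequality rather than equality precisely because some top-dimensional components need not be defined over $\F_q$, as you note). Your additional bookkeeping about multiply-counted points and the diagonal component is sound but not needed beyond what the paper records.
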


\begin{Rem}
The important case for us will be when $X$ and $Y$ are fixed. Then the implied constant will depend only on the degree of $f$. 
See Proposition 3.7 in \cite{GW} for an alternative approach when $C=2$ and $f$ is finite and separable. 
\end{Rem}

\begin{proof}
Since
\[
\xymatrix{
(X\times_{Y}X)(\F_q) \ar[r] \ar[d] &X(\F_q)\ar[d]\\
X(\F_q) \ar[r] & E_{\F_q}}\]
is a Cartesian diagram of finite sets, the Cauchy--Schwarz inequality implies
\begin{equation}
\frac{|X(\F_q)|^2}{|E_{\F_q}|}\leq |(X\times_Y X)(\F_q)|
\label{LHS_main_inequality_approach}
\end{equation}
On the other hand, by the Lang--Weil bound (\cite{LW}), we have
\[|X(\F_q)|=q^k+O(q^{k-\frac{1}{2}})\]
(where the implied constant depends only on the complexity of $X$) and
\[|(X\times_Y X)(\F_q)|\leq Cq^k+O(q^{k-\frac{1}{2}})\]
(where the implied constant depends on the complexity of $X$, $Y$, and $f$). The reason for the inequality is that some
of the top-dimensional components of $X\times_Y X$ may not be defined over $\F_{q}$. Combining these, we obtain the desired
conclusion. 
\end{proof}

We note that the two-dimensional variant of Conjecture \ref{main_conj} holds true, and is easy. 

\begin{Prop}
Let $L(t)\in \F_{q_0}[t]$ be an arbitrary polynomial in one variable. Consider the map
\begin{align*}
\A^2_{\F_{q_0}} &\longrightarrow \A^2_{\F_{q_0}}\\
(s,t) &\longmapsto (s,st+L(t)).
\end{align*}
For each extension $\F_q/\F_{q_0}$, let $E_{\F_q}$ be the image of the induced map $\A^2(\F_q)\to\A^2(\F_q)$ on 
$\F_q$-points. Then
\[|E_{\F_q}|\geq\frac{q^2}{2}-O(q^\frac{3}{2}),\]
where the implied constant depends only on the degree of $L$.
\label{easy_2D_Kakeya}
\end{Prop}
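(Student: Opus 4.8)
The plan is to apply Lemma~\ref{one_over_C_bound} directly with $X=Y=\A^2_{\F_{q_0}}$, $k=2$, and $f=\varphi$ the given map $(s,t)\mapsto (s,st+L(t))$. Since $X$ is geometrically irreducible of dimension $2$, the only thing to verify is that the fiber product $X\times_Y X$ has dimension $2$ and to count its top-dimensional geometrically irreducible components $C$; the bound $\frac{q^2}{2}-O(q^{3/2})$ will follow the moment we show $C=2$.

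First I would write down $X\times_Y X$ explicitly. A point is a pair $((s,t),(s',t'))$ with $\varphi(s,t)=\varphi(s',t')$, i.e.
\begin{align*}
s &= s',\\
st+L(t) &= s't'+L(t').
\end{align*}
Substituting $s=s'$, the second equation becomes $s(t-t')+L(t)-L(t')=0$. Since $L(t)-L(t')$ is divisible by $t-t'$, write $L(t)-L(t')=(t-t')\widetilde{L}(t,t')$ where $\widetilde{L}\in\F_{q_0}[t,t']$. Then the equation factors as
\[
(t-t')\bigl(s+\widetilde{L}(t,t')\bigr)=0.
\]
So inside $X\times_Y X\subset\A^3_{s,t,t'}$ (after eliminating $s'=s$) the locus is the union of the diagonal $\{t=t'\}$ and the hypersurface $\{s+\widetilde{L}(t,t')=0\}$. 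Both are irreducible surfaces of dimension $2$: the first is clearly a plane, and the second is the graph of $s$ as a polynomial function of $(t,t')$, hence isomorphic to $\A^2$ and geometrically irreducible. Thus $\dim(X\times_Y X)=2$ and there are exactly two top-dimensional components, so $C=2$, provided these two components are genuinely distinct.

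The one point requiring care is that the diagonal and the graph component must be distinct components, i.e. the graph $\{s=-\widetilde{L}(t,t')\}$ should not contain or coincide with $\{t=t'\}$; if $L$ is linear, say $L(t)=at+b$, then $\widetilde{L}=a$ is constant and the second component is the plane $\{s=-a\}$, which is transverse to the diagonal, so the two components are still distinct and $C=2$. In every case the diagonal is not contained in the graph (the graph imposes a nontrivial relation on $s$, while the diagonal leaves $s$ free), so the two components are always distinct and $C=2$. I expect this elementary factorization to be the entire content of the argument; the only potential subtlety is confirming that no degenerate choice of $L$ merges the components or drops the dimension, but the divisibility $t-t'\mid L(t)-L(t')$ makes the factorization completely general. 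Feeding $C=2$ and $k=2$ into Lemma~\ref{one_over_C_bound} yields $|E_{\F_q}|\geq\frac{1}{2}q^2-O(q^{3/2})$, as claimed, with the implied constant controlled by $\deg L$ through the complexity of $\widetilde{L}$.
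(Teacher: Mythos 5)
Your proposal is correct and is essentially identical to the paper's own proof: both compute the fiber product explicitly as $\{(s,t,t')\mid (t-t')(s+\widetilde{L}(t,t'))=0\}$, observe that it is the union of the diagonal and the graph of $-\widetilde{L}$, hence has exactly two geometrically irreducible top-dimensional components regardless of $\deg L$, and feed $C=2$ into Lemma~\ref{one_over_C_bound}. (Your extra care about the components being distinct is harmless but unnecessary, since $C$ enters the bound only as an upper bound on the number of components.)
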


\begin{proof}
The fiber product of the given map $\A^2\to\A^2$ with itself can be described explicitly as
\begin{align}
\A^2\times_{\A^2}\A^2 &=\{(s,t_1,t_2)\in\A^3\ |\ st_1+L(t_1)=st_2+L(t_2)\}\notag \\
&=\{(s,t_1,t_2)\in\A^3\ |\ (t-t_1)(s-\widetilde{L}(t_1,t_2))\},
\label{two_diml_fiber_prod_explct}
\end{align}
where $\widetilde{L}$ is defined by $L(t_1)-L(t_2)=(t_1-t_2)\widetilde{L}(t_1,t_2).$
This has two geometrically irreducible components, {\it regardless} of the degree of $L$. 
\end{proof}

\begin{Rem}
In fact, in this $2$-dimensional case, we can remove the error term: $|E_{\F_q}|\geq \frac{q^3}{2q-1}\geq\frac{q^2}{2}.$  The reason is that we can give an explicit count for the number of $\F_q$-points of
(\ref{two_diml_fiber_prod_explct}): there are $q^2$ points where $t=t_1,$ $q^2$ points where $s=\widetilde{L}(t_1,t_2),$ and $q$ points that have been counted twice; total $2q^2-q$. Now the bound without error term follows from (\ref{LHS_main_inequality_approach}).
\label{2d_w_o_error_bound}
\end{Rem}

\begin{Rem}
This estimate, without the error term, is precisely the main result in \cite{Carl}\footnote{Note that this paper states a hypothesis $n<p$ on l. 3 which is never actually used.}. Any Kakeya subset of $\F_q^2$ can be represented as $\{(s,sx+f(x))\ |\ s,x\in\F_q\}$ for some polynomial $f(x)\in\F_q[x]$ by interpolation. So, we can say that \cite{Carl} is exactly the $\frac{q^2}{2}$ bound for Kakeya subsets of $\F_q^2$, and it can be seen as an alternative proof of the $2$-dimensional finite field Kakeya problem, published in 1955 (before the finite field Kakeya problem was even posed). 
\end{Rem}

\begin{Rem}
We can parallel the approach that we present here and the one in \cite{Carl} for the $\frac{q^2}{2}$ bound. 
Namely, equation (2.7) in \cite{Carl} 
modifies readily to higher dimensions to become our
inequality (\ref{LHS_main_inequality_approach}); 
both derivations of this are based on the Cauchy----Schwarz inequality (it is just that our approach is slightly more direct, as we use Cauchy----Schwarz once while Carlitz uses it twice). Also, Carlitz's  equation (2.8) obtained by an elementary exponential sums argument is exactly our count for the number of $\F_q$-points in
(\ref{two_diml_fiber_prod_explct}) of Remark \ref{2d_w_o_error_bound}. One way or another, the reason the $2$-dimensional case is easy is that we can give an explicit count for the number of $\F_q$-points in the fiber product (\ref{two_diml_fiber_prod_explct}); in higher dimensions, we will need to use the Lang--Weil bound. 
\end{Rem}

\subsection{Indecomposability of certain polynomials}

We will give two proofs of Proposition \ref{separated_vars},
 both of which make substantial use of the case $e=0$ in the Lemma below. The case $e=2$ will be used later in Section \ref{subsection_case_mixed_vars} in the proof of Proposition \ref{mixed_vars_prop}.  

\begin{Lem} Let $e\in\{0,2\}.$ When $e=2$, assume for convenience that $p>2$. 
Let $f(x)\in \overline{\F_p}[x]$ be a polynomial. Suppose that there exist polynomials 
$Q(t)\in \overline{\F_p}[t]$ and $\lambda(x,y)\in \overline{\F_p}[x,y]$ with
$\deg Q\geq 2$ such that
\[(x-y)^e\frac{f(x)-f(y)}{x-y}=Q(\lambda(x,y))\]
as polynomials in $\overline{\F_p}[x,y].$ Then $f(x)$ is a linearized polynomial. 
\label{truly_poly_in_two_vars}
\end{Lem}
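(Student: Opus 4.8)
The guiding observation is that $f$ is linearized precisely when the difference quotient $D(x,y):=\frac{f(x)-f(y)}{x-y}$ depends only on $x-y$. Indeed, if $D(x,y)=\Psi(x-y)$ for some univariate $\Psi$, then $f(x)-f(y)=(x-y)\Psi(x-y)$; specializing $y=0$ gives $f(x)-f(0)=x\Psi(x)$, and comparing the two identities yields $f(x+y)=f(x)+f(y)-f(0)$, so $f-f(0)$ is additive, i.e.\ $f$ is linearized. The converse is immediate from $f(x)-f(y)=\sum_i a_i(x-y)^{p^i}$. Since $(x-y)^e$ already depends only on $x-y$, it is equivalent to show that $g(x,y):=(x-y)^e D(x,y)=Q(\lambda(x,y))$ depends only on $x-y$. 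Writing $f(x)=\sum_i a_i x^i$, this amounts to proving $a_i=0$ whenever $i\ge 1$ is not a power of $p$.

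\textbf{Leading forms.} First I would compare the top-degree homogeneous parts of the two sides of $g=Q(\lambda)$. Writing $d=\deg f=p^a d'$ with $p\nmid d'$, the leading form of $g$ is $a_d(x-y)^e\,\frac{x^d-y^d}{x-y}=a_d(x-y)^{e+p^a-1}\prod_{\zeta\neq 1}(x-\zeta y)^{p^a}$, the product being over the $d'$-th roots of unity $\zeta\neq 1$. If $m:=\deg Q$ and $\lambda$ has leading form $\lambda_{\mathrm{top}}$, this equals $(\text{const})\cdot\lambda_{\mathrm{top}}^m$; as $\overline{\F_p}[x,y]$ is a UFD, $m$ must divide the multiplicity of each linear factor. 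If $d'\ge 2$ there is a factor $(x-\zeta y)$ with $\zeta\neq 1$, forcing $m\mid p^a$, hence $m=p^b$ with $b\ge 1$; but $m$ also divides $e+p^a-1$, so $p^b\mid(e-1)$, impossible for $e\in\{0,2\}$ and $m\ge 2$. Hence $d'=1$, so $\deg f=p^a$ is a power of $p$; moreover $\lambda_{\mathrm{top}}=c\,(x-y)^{\ell}$ is a power of $x-y$, and $m\mid(p^a-1+e)$. This last divisibility is the crucial gain: since $p^a-1+e\equiv e-1\pmod p$ is prime to $p$, we are in the \emph{tame} case $p\nmid m$, which is exactly what the hypothesis $e\in\{0,2\}$ is designed to secure.

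\textbf{Main step.} Passing to shear coordinates $u=x-y$, $v=y$ and setting $\hat g(u,v)=g(u+v,v)$, the Hasse expansion $f(u+v)-f(v)=\sum_{k\ge 1}f^{[k]}(v)u^{k}$ gives $\hat g=u^e\sum_{j\ge 0}f^{[j+1]}(v)\,u^{j}$, so the coefficient of $u^{j+e}$ in $\hat g$ is the Hasse derivative $f^{[j+1]}(v)$. I would read $\hat g=Q(\hat\lambda)$ as a \emph{univariate} decomposition over the field $K=\overline{\F_p}(v)$: here $\deg_u\hat g=N=m\ell$ is prime to $p$, the leading coefficient is the constant $f^{[p^a]}(v)=a_{p^a}$, and by the leading-form step $\hat\lambda$ has $u$-degree $\ell$ with constant leading coefficient. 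Because $p\nmid m$, the tame theory of decompositions applies: $\hat\lambda$ is determined, up to a $K$-affine change $t\mapsto\gamma t+\delta$, by the \emph{approximate $m$-th root} $\rho$ of $\hat g$, whose coefficients are explicit polynomial expressions (over $\Z[\tfrac1m,\tfrac1{a_{p^a}}]$) in the coefficients of $\hat g$. The target is to conclude that every $f^{[j+1]}(v)$ is constant in $v$; a Lucas-type computation (if $i$ is not a power of $p$ then $\binom{i}{p^{s}}\not\equiv 0$ for the lowest nonzero base-$p$ digit position $s$) then forces $a_i=0$ for all non-$p$-power $i$, i.e.\ $f$ linearized.

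\textbf{Where the difficulty lies.} The honest obstacle is the final implication: the leading-form analysis controls only the \emph{top} $u$-form of $\hat g$, whereas the Hasse coefficients $f^{[j+1]}(v)$ that encode the forbidden exponents of $f$ live in the lower-order terms. To reach them I would use the \emph{full} identity $\hat g=Q(\hat\lambda)$: substitute the approximate root $\rho$, exploit the symmetry $g(x,y)=g(y,x)$ (in characteristic $\neq 2$, coordinates $x\pm y$ make $\hat g$ even in $u$ and roughly halve the unknowns), and match coefficients of successive powers of $u$ by descending induction to force the nonconstant parts of each $f^{[j+1]}(v)$ to vanish. The tameness $p\nmid m$ furnished by $e\in\{0,2\}$ is precisely what keeps the relevant binomial and leading coefficients invertible throughout this matching, and is the source of the restrictions on $e$ and on $p$.
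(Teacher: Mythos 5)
Your reformulation ($f$ is linearized iff $(x-y)^e\frac{f(x)-f(y)}{x-y}$ depends only on $x-y$) and your leading-form analysis are both correct, and they coincide with the opening of the paper's proof: one gets $\deg f=p^a$, $\lambda_{\mathrm{top}}=c(x-y)^{\ell}$, and $m\mid p^a-1+e$, hence $p\nmid m$. (Your reformulation would in fact let you skip the paper's final paragraph: once $\lambda$ is known to be a polynomial in $x-y$, you are done.) But the proof stops there. Everything after ``Main step'' is a plan, not an argument, and you say so yourself: the entire content of the lemma is the passage from the top form to the lower-order coefficients, and that passage is never carried out. Moreover, the route you propose is not obviously workable: the approximate $m$-th root $\rho$ of $\hat g$ over $K=\overline{\F_p}(v)$ has coefficients that are universal polynomial expressions in the coefficients of $\hat g$, i.e.\ in the very Hasse derivatives $f^{[j+1]}(v)$ whose constancy you are trying to prove. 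Tameness guarantees that $\hat\lambda$ is an affine twist of $\rho$ over $K$, but gives no a priori reason for $\rho$ (or $\hat\lambda$) to have constant coefficients, so the matching step you defer is exactly where the whole difficulty sits.

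For comparison, the paper closes this gap by a descending induction on the homogeneous components $\lambda_k$ of $\lambda$ (in the original $x,y$-grading, not shear coordinates): assuming $\lambda_{k'}=0$ for $k<k'<s$, the degree-$(sm-s+k)$ homogeneous piece of $Q(\lambda)$ reduces to $b_0 m\lambda_s^{m-1}\lambda_k$, which is matched against $a_t(x-y)^e(x^{t-1}+\cdots+y^{t-1})$ with $t=sm-s+k+1-e$; comparing the multiplicity of the factor $x-y$ on both sides (using that this multiplicity on the left is $e+p^c-1\le e+p^{a-1}-1$) contradicts the key inequality $s<p^a-p^{a-1}$ unless $a_t=0$ and $\lambda_k=0$. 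That inequality, which your write-up never isolates, is the engine of the whole proof, and it fails exactly for $e=2$, $p=3$, $\deg f=3$, which the paper must (and does) treat by a separate hand computation. So the proposal is not a proof: it reproduces the easy first third of the argument and replaces the remaining two thirds by a heuristic whose central step is acknowledged but not executed.
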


\begin{proof}
Throughout the proof, we will be using the following fact: if $t=p^cN$ with $p\nmid N,$ then $x=1$ is a root of the polynomial $x^{t-1}+x^{t-2}+\dots+x+1$ of multiplicity exactly $p^c-1.$ This is so because 
\[\frac{x^t-1}{x-1}=\frac{(x^N-1)^{p^c}}{x-1},\]
and $x=1$ is a simple root of $x^N-1$. Equivalently, in the factorization of $x^{t-1}+x^{t-2}y+\dots+y^{t-1}\in \overline{\F_p}[x,y],$
the multiplicity of the linear factor $x-y$ is exactly $p^c-1.$ Also, when $N$ is not a power of $p$, the polynomial $x^N-1$ has a root other than $x=1$. 

Let $d=\deg f$, $m=\deg Q\geq 2,$ and $s=\deg\lambda,$ so $e+d-1=ms.$ Write $f(x)=\sum_{t=0}^d a_t x^t$. 
Write $\lambda=\lambda_s+\lambda_{s-1}+\dots+\lambda_0,$ where each $\lambda_i$ is homogeneous of degree $i$.  
By assumption,
\begin{equation}
(x-y)^e\frac{f(x)-f(y)}{x-y}=b_0(\lambda_s+\lambda_{s-1}+\dots+\lambda_1+\lambda_0)^m+b_1(\lambda_s+\dots+\lambda_0)^{m-1}+\dots,\label{homog_equate}
\end{equation}
where $b_0\neq 0.$
Comparing the top homogeneous parts above and setting $y=1$, we deduce that 
\[a_d(x-1)^e(x^{d-1}+x^{d-2}+\dots+x+1)=b_0\lambda_s(x,1)^m.\]
Write $d=p^aN$ with $p\nmid N$ and $a\geq 0$. 
If $N>1$ and $\zeta\neq 1$ is an $N$-th root of $1$ in $\overline{\F_p}$, then $x-\zeta$ appears on the LHS with multiplicity $p^a$, hence $m|p^a|d,$ which is impossible, since $m|e+d-1=d\pm 1$. Therefore, $d=p^a$, and so, up to a nonzero factor, $\lambda_s=(x-y)^s$.

Note that $s<p^a-p^{a-1}$ unless $e=2,p=3,a=1$. Indeed, if $s\geq p^a-p^{a-1},$ after multiplying both sides by $m\geq 2,$ we would obtain $e+p^a-1=sm\geq 2(p^a-p^{a-1})$. When $e=0,$ this is clearly impossible. When $e=2,$ we are assuming $p>2$, so this inequality is again impossible, unless $p=3,a=1$. We postpone 
this case and handle it separately. 

We claim that $\lambda_k=0$ for each $k\in\{1,...,s-1\}.$ We argue by descending induction on $k$. 
Fix $k\in\{1,...,s-1\}$ and suppose that for all $k'$ with $k<k'<s,$ we have $\lambda_{k'}=0.$
Consider the homogeneous components on both sides of (\ref{homog_equate}) of degree $sm-s+k.$
The induction hypothesis implies that $\lambda_s^{m-1}\lambda_k$ is the only term that contributes to the RHS
(note also that $sm-s+k>s(m-1)$), and hence,
letting $t=sm-s+k+1-e,$ we obtain
\[a_t(x-y)^e(x^{t-1}+...+y^{t-1})=b_0m\lambda_s^{m-1}\lambda_k.\]
Note that $p\nmid m$, as $p^a\pm 1=sm.$

Write $t=p^cN$ with $p\nmid N.$ Suppose that $a_t\neq 0.$ Comparing the multiplicity of the factor $x-y$ on the LHS and RHS above, we obtain $e+p^c-1\geq sm-s.$ But, $t<p^a$ and so $c\leq a-1,$ giving the chain of inequalities
\[e+p^{a-1}-1\geq e+p^c-1\geq sm-s=e+p^a-1-s.\]
However, this contradicts the inequality $s<p^a-p^{a-1}$ that we obtained earlier. 
Therefore, $a_t=0$ and $\lambda_k=0.$
This completes the induction step. 

Suppose that the coefficient $a_t$ of $x^t$ in $f(x)$ is nonzero. Comparing the homogeneous terms of degree $t-1+e$ in
(\ref{homog_equate}), we deduce that
\[a_t(x-y)^e(x^{t-1}+\dots+y^{t-1})=c_t\lambda_s^l\]
for some constant $c_t$ and some integer $l$. If $t$ is not a power of $p$, the LHS would have a linear 
factor besides $x-y$, while the RHS is a power of $x-y.$ 

We are left with the case $e=2,p=3,d=3.$ Without loss of generality, $f$ is monic. Say
\[(x-y)^2(x^2+xy+y^2+a_2(x+y)+a_1)=(\lambda_2+\lambda_1+\lambda_0)^2+b_1(\lambda_2+\lambda_1+\lambda_0)+b_2.\]
Compare the degree-$3$ homogeneous parts on both sides:
\[a_2(x-y)^2(x+y)=2\lambda_2\lambda_1.\]
So, $\lambda_1$ is a multiple of $x+y$.
Compare now the homogeneous terms of degree $2$:
\[a_1(x-y)^2=(2\lambda_0+b_1)\lambda_2+\lambda_1^2.\]
This implies that $(x-y)|\lambda_1,$ and so $\lambda_1=0.$ The proof finishes as in the main case, considered above. 
\end{proof}

\begin{Def}
For a polynomial $f(x)\in\F_{q_0}[x],$ define $\widetilde{f}(x,y)\in\F_{q_0}[x,y]$ via 
\[f(x)-f(y)=(x-y)\widetilde{f}(x,y).\]
\end{Def}

\section{Main results}

\subsection{The case of separated variables}

Fix a finite field $\F_{q_0}$ and let $p$ be its characteristic.
We now give two proofs of Proposition \ref{separated_vars}. 

Linearized polynomials, after perturbations
by linear terms, have large image sets on $\F_q$-points. 

\begin{Lem}
Let $f(x)\in\F_{q}[x]$ be a linearized polynomial with coefficients in a finite field $\F_q$. Assume that the characteristic $p$ of $\F_q$ is odd. Then for at least $\frac{p-2}{p-1}q$ values of $a\in\F_q,$ the polynomial $f(x)+ax$ is a permutation polynomial of $\F_q$. 
\label{many_perturbations_of_lin_are_PP}
\end{Lem}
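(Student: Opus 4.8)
The plan is to reduce the permutation question to a statement about an $\F_p$-linear endomorphism of $\F_q$. First I would discard the constant term of $f$: adding a constant to a map $\F_q\to\F_q$ does not change whether it is a bijection, so without loss of generality $f(0)=0$. Then $f$ is $\F_p$-linear, since each monomial $x^{p^i}$ is the ($\F_p$-linear) Frobenius and $f$ is a sum of such. For each $a\in\F_q$ the perturbation $g_a(x):=f(x)+ax$ is again $\F_p$-linear, because $x\mapsto ax$ is even $\F_q$-linear. As $\F_q$ is a finite-dimensional $\F_p$-vector space, $g_a$ is a permutation of $\F_q$ if and only if it is injective, i.e.\ if and only if $\ker g_a=0$.

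Next I would pin down exactly the ``bad'' values of $a$. A nonzero $x$ lies in $\ker g_a$ precisely when $f(x)+ax=0$, i.e.\ when $a=-f(x)/x$. Hence $g_a$ fails to be a permutation if and only if $a$ lies in
\[ B=\{-f(x)/x\ :\ x\in\F_q^*\}. \]
The good values of $a$ therefore number exactly $q-|B|$, and the whole task is to bound $|B|$ from above.

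The crux is to control the fibers of the map $h\colon\F_q^*\to\F_q$, $x\mapsto f(x)/x$; note $|B|=|h(\F_q^*)|$, since negation is a bijection of $\F_q$. For a value $c$ in the image, the fiber
\[ h^{-1}(c)=\{x\neq 0\ :\ f(x)=cx\} \]
consists of the nonzero vectors of $\ker(f-m_c)$, where $m_c$ is multiplication by $c$, which is again $\F_p$-linear. Thus every nonempty fiber is a punctured $\F_p$-subspace, of size $p^{k}-1\geq p-1$ for some $k\geq 1$. These fibers partition $\F_q^*$, so the number of distinct values satisfies $|B|=|h(\F_q^*)|\leq (q-1)/(p-1)$.

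Finally I would convert this into the stated count: the number of good $a$ is
\[ q-|B|\ \geq\ q-\frac{q-1}{p-1}\ =\ \frac{(p-2)q+1}{p-1}\ >\ \frac{p-2}{p-1}\,q, \]
as desired. I do not expect a serious obstacle; the only point that must be gotten right is that $f-m_c$ is genuinely $\F_p$-linear, so that its kernel is an $\F_p$-subspace and each nonempty fiber has size at least $p-1$ — this is exactly where the linearized hypothesis is used. The oddness of $p$ is not needed for the argument itself; it only ensures the bound $\tfrac{p-2}{p-1}q$ is nonvacuous (for $p=2$ it degenerates to $0$).
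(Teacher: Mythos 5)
Your proof is correct and is essentially the paper's own argument: both identify the bad values of $a$ as the image of $x\mapsto -f(x)/x$ on $\F_q^*$ and use that each nonempty fiber is a punctured $\F_p$-subspace of size at least $p-1$, giving at most $\frac{q-1}{p-1}$ bad values. Your explicit handling of the constant term $f(0)$ and the final inequality $q-\frac{q-1}{p-1}>\frac{p-2}{p-1}q$ are fine.
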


\begin{proof}
This follows from the Remarks succeeding Theorem 1 and Conjecture 2 in \cite{EGN}. We include the argument here. 
Since $f$ is linearized, for each $a\in\F_q$, we have that $f(x)+ax$ is an $\F_p$-linear map $\F_q\to\F_q.$  
If it is not a permutation polynomial, it will have a kernel of dimension at least one, hence size at least $p$.
Thus, in this case, there will be at least $p-1$ values of $x\in\F_q^*$ which map to $a$ under the map
$F_q^*\to \F_q, x\mapsto -\frac{f(x)}{x}.$ So, the number of values of $a$ such that $f(x)+ax$ is not a permutation polynomial is at most $\frac{q-1}{p-1}.$
\end{proof}

We are now ready to give the first proof of Proposition \ref{separated_vars}. In the case when 
both $L$ and $M$ are linearized, we assume that $p\geq 5.$

\begin{proof}[First proof of Proposition \ref{separated_vars}]
Suppose first that at least one of $L(t_1),M(t_2)$ is not a linearized polynomial. Then
at least one of $\widetilde{L}(t_1,t_1'),\widetilde{M}(t_2,t_2')$ is not decomposable, by Lemma
\ref{truly_poly_in_two_vars}. Therefore, by a theorem of Schinzel (see \cite{Schinzel}), the polynomial
$\widetilde{L}(t_1,t_1')-\widetilde{M}(t_2,t_2')$ is irreducible. Take the fiber product of the given map $\varphi:\A^3\to\A^3$ with itself; this fiber product is explicitly given by
\[V\left(  (t_1-t_1')(s-\widetilde{L}(t_1,t_1')),(t_2-t_2')(s-\widetilde{M}(t_2,t_2'))\right)\subset\A^5_{s,t_1,t_1',t_2,t_2'}.\]
Therefore, it has $4$ irreducible components of top dimension, namely:
$V(t_1-t_1',t_2-t_2'), V(t_1-t_1',s-\widetilde{M}(t_2,t_2')),
V(t_2-t_2',s-\widetilde{L}(t_1,t_1')), V(s-\widetilde{M}(t_2,t_2'),s-\widetilde{L}(t_1,t_1')).$
Note that 
$V(s-\widetilde{M}(t_2,t_2'),s-\widetilde{L}(t_1,t_1'))\simeq V(\widetilde{L}(t_1,t_1')-\widetilde{M}(t_2,t_2'))\subset\A^4$ is indeed irreducible, by the result of Schinzel. 
So, in this case, the conclusion follows by Lemma \ref{one_over_C_bound}. 

Suppose now that both $L$ and $M$ are linearized, and $p\geq 5.$ There are at most $\frac{q}{4}$ values of $s\in\F_q$ such that 
$L_s(t):=L(t)+st$ is not a permutation polynomial; similarly, there are 
at most $\frac{q}{4}$ values of $s\in\F_q$ such that $M_s(t):=M(t)+st$ is not a permutation polynomial. Overall, there are at least
$\frac{q}{2}$ values of $s\in\F_q$ such that both $L_s$ and $M_s$ are permutation polynomials. Thus, the total image set has size at least $\frac{q}{2}.q.q$ in this case (without error term).
In fact, this bound can be improved in larger characteristic. 
\end{proof}

The second proof of Proposition \ref{separated_vars} that we give is based on the Lemma below,
in place of Schinzel's irreducibility theorem. 

\begin{Lem}
Let $L(x)\in \overline{\F_p}[x]$ be any polynomial which is not linearized. 
For $a\in \overline{\F_p},$ define $L_a(x)=L(x)+ax.$ Then 
\[|\{a\in \overline{\F_p}\ |\ \widetilde{L_a}(x,y)\ \text{is reducible}\}|<\deg L.\]
\end{Lem}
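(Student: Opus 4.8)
The plan is to identify the set in question with the set of reducible members of a one-parameter pencil and to bound it by a Stein-type inequality. The first step is the identity $\widetilde{L_a}=\widetilde L+a$, immediate from $L_a(x)-L_a(y)=(x-y)\bigl(\widetilde L(x,y)+a\bigr)$. Writing $g:=\widetilde L$ (so $\deg g=\deg L-1$), the quantity to estimate is exactly $|\{a\in\overline{\F_p}\mid g+a\text{ is reducible}\}|$, the number of reducible fibers of the affine pencil $\{g+a\}_a$.

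The structural input is that $g$ is indecomposable. Lemma \ref{truly_poly_in_two_vars} (case $e=0$) says that if $\widetilde L=Q(\lambda)$ with $\deg Q\geq 2$, then $L$ is linearized; since $L$ is not linearized, $g$ admits no such decomposition over $\overline{\F_p}$. Crucially this remains true when $Q$ is \emph{inseparable} (e.g.\ $Q=T^p$), because Lemma \ref{truly_poly_in_two_vars} is proved with exactly that generality, and this is precisely what excludes the inseparable generic factorizations peculiar to characteristic $p$. From here I would invoke the Bertini--Krull theorem (see \cite{Schinzel}) to conclude that the generic member $g+t\in\overline{\F_p}(t)[x,y]$ is absolutely irreducible.

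Granting absolute irreducibility of the generic fiber, a Stein-type inequality (in its form valid over any algebraically closed field, bounding the number of reducible members of the pencil attached to an indecomposable bivariate polynomial) yields
\[
|\{a\in\overline{\F_p}\mid g+a\text{ reducible}\}|\leq \deg g-1=\deg L-2<\deg L,
\]
which is the assertion. As a sanity check, $L=x^3$ (with $p\neq 3$) gives $g=x^2+xy+y^2$, which is reducible only at $a=0$, matching $\deg g-1=1$; and when $\deg L=2$ the pencil is linear and the spectrum is empty.

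The main obstacle is the passage from indecomposability to the \emph{linear} bound $\deg g-1$. A naive resultant or specialization estimate only produces a bound quadratic in $\deg g$, far weaker than needed; the sharp linear bound rests on the geometry of the pencil. Concretely, the leading form of $g$ is, up to a nonzero scalar, $\tfrac{x^d-y^d}{x-y}$ with $d=\deg L$, and for each factorization of this form one may attempt to lift it term by term down the degrees; the obstruction that appears once the degree drop exceeds $1$ is exactly what caps the number of admissible $a$, and controlling this obstruction (equivalently, counting the drop in the number of irreducible components across special fibers) is the technical heart. The second delicate point is purely characteristic-$p$: one must guarantee that no inseparable generic splitting occurs, which is why I route the indecomposability through Lemma \ref{truly_poly_in_two_vars} in its inseparable-allowing form rather than through a characteristic-zero statement.
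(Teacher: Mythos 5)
Your proposal is correct and follows essentially the same route as the paper: reduce to $\widetilde{L_a}=\widetilde L+a$, use Lemma \ref{truly_poly_in_two_vars} (with $e=0$, and crucially allowing inseparable $Q$) to get indecomposability of $\widetilde L$, and then invoke a Stein/Lorenzini-type bound on the number of reducible members of the pencil $\widetilde L+a$. The paper simply cites Corollary 1 of \cite{L} for this last step, which is exactly the Bertini--Krull plus Stein-type inequality you describe.
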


\begin{proof}
By Lemma \ref{truly_poly_in_two_vars}, we know that $\widetilde{L}(x,y)$ is not of the form $Q(\lambda(x,y)),$ 
where $\deg Q>1$. Now, by Corollary 1 in \cite{L}, for all but at most $\deg L-1$ values of $a$, the polynomial 
$\widetilde{L_a}(x,y)=\widetilde{L}(x,y)+a$
will be irreducible.
\end{proof}

In the second proof of Proposition \ref{separated_vars}, we assume that $p\geq 3$ when exactly one of $L,M$ is linearized, and $p\geq 5$ when both $L,M$ are linearized. 

\begin{proof}[Second proof of Proposition \ref{separated_vars}]
Suppose first that none of $L$ and $M$ is linearized. For at least $q-(\deg(L)+\deg(M))$ values of 
$s\in\F_q,$ both polynomials $\widetilde{L_a}(x,y)$ and $\widetilde{M_a}(x,y)$ are geometrically irreducible,
hence the image sets of $L_s(t_1)$ and  $M_s(t_2)$ each have size at least $\frac{q}{2}-O(\sqrt{q}).$ Overall, the size of the image set 
$\varphi(\F_q^3)$
is then at least $q\frac{q}{2}\frac{q}{2}-O(q^{\frac{5}{2}}).$

Suppose that $L$ is linearized but $M$ is not, and $p\geq 3.$ For at least $\frac{q}{2}$ values of $s\in\F_q,$ $L_s$ is a 
permutation polynomial of $\F_q.$ Also, for at least $q-\deg(M)$ values of $s\in\F_q,$ the polynomial 
$\widetilde{M_s}(x,y)$ is geometrically irreducible. Overall, for $\frac{q}{2}$ values of $s\in\F_q,$ we know that $L_s$ is a permutation polynomial and $\widetilde{M_s}(x,y)$ is geometrically irreducible, hence $M_s$ has image set of size at least $\frac{q}{2}-O(\sqrt{q})$. Therefore, the total image size is at least 
$\frac{q}{2}.q.\frac{q}{2}-O(q^{\frac{5}{2}}).$

When both $L,M$ are linearized, we finish as in the first proof. 
\end{proof}

\subsection{The case of mixed variables}
\label{subsection_case_mixed_vars}

In this section, we prove Proposition \ref{mixed_vars_prop}.

\begin{Lem}
Let $k$ be any algebraically closed field. 
Let $\widetilde{f}(t_2,t_2'),\widetilde{g}(t_1,t_1')$ be two polynomials, not both zero, and such that
$(t_2-t_2')^2\widetilde{f}(t_2,t_2')-(t_1-t_1')^2\widetilde{g}(t_1,t_1')\in k[t_1,t_1',t_2,t_2']$ has at most $t$
irreducible factors. Consider the variety
\[X=V(s(t_1-t_1')+(t_2-t_2')\widetilde{f}(t_2,t_2'), s(t_2-t_2')+(t_1-t_1')\widetilde{g}(t_1,t_1'))\subset\A^5_{s,t_1,t_1',t_2,t_2'}.\]
Then $\dim X=3,$ and $X$ has at most $t+1$ irreducible components of maximal dimension. 
\label{most_basic_lem_mixed_vars}
\end{Lem}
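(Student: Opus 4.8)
The plan is to eliminate $s$ and reduce to the hypersurface $V(H)\subset\A^4_{t_1,t_1',t_2,t_2'}$, where $H=(t_2-t_2')^2\widetilde{f}(t_2,t_2')-(t_1-t_1')^2\widetilde{g}(t_1,t_1')$ is the polynomial in the hypothesis. First I would record the basic algebraic identity: writing the two defining equations as $F_1=s(t_1-t_1')+(t_2-t_2')\widetilde{f}$ and $F_2=s(t_2-t_2')+(t_1-t_1')\widetilde{g}$, one computes $(t_2-t_2')F_1-(t_1-t_1')F_2=H$, so $H$ vanishes identically on $X$; hence the projection $\pi:X\to\A^4$ forgetting $s$ has image contained in $V(H)$. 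I would also check that the hypothesis that $\widetilde{f},\widetilde{g}$ are not both zero forces $H\neq 0$: since $(t_2-t_2')^2\widetilde{f}$ involves only $t_2,t_2'$ while $(t_1-t_1')^2\widetilde{g}$ involves only $t_1,t_1'$, their difference can vanish only if each term is a constant, i.e. only if $\widetilde{f}=\widetilde{g}=0$. Thus $V(H)$ is a genuine hypersurface of dimension $3$, with at most $t$ irreducible components, one per distinct irreducible factor of $H$.

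The core step is to analyze $X$ over the open locus $\Omega=\{(t_1-t_1')(t_2-t_2')\neq 0\}$. There the two equations are independent linear conditions on $s$; solving the first gives $s=-(t_2-t_2')\widetilde{f}/(t_1-t_1')$, and substituting into the second reproduces exactly the condition $H=0$. Consequently $\pi$ restricts to an isomorphism $X\cap\pi^{-1}(\Omega)\xrightarrow{\sim}V(H)\cap\Omega$, with $s$ recovered as a regular function of the remaining coordinates. Since intersecting with an open set cannot increase the number of irreducible components, $X\cap\pi^{-1}(\Omega)$ has at most $t$ irreducible components, each of dimension $3$, and their closures in $X$ account for at most $t$ top-dimensional components of $X$.

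It remains to control the closed locus $X\cap\{(t_1-t_1')(t_2-t_2')=0\}$, and this bookkeeping is the step requiring the most care. On $\{t_1=t_1'\}$ the equations become $(t_2-t_2')\widetilde{f}=0$ and $s(t_2-t_2')=0$, which decompose into the diagonal $D=V(t_1-t_1',t_2-t_2')$ (of dimension $3$, with $s$ free) together with $V(t_1-t_1',s,\widetilde{f}(t_2,t_2'))$ (of dimension $2$); the locus $\{t_2=t_2'\}$ is symmetric. Hence the only component of $X$ contained in $\{(t_1-t_1')(t_2-t_2')=0\}$ of dimension $3$ is $D$, and $D$ is genuinely distinct from the closures produced over $\Omega$ (those meet $\Omega$, whereas $D\subset\{(t_1-t_1')(t_2-t_2')=0\}$). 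Combining the two loci, every component of $X$ has dimension $\le 3$ while $D$ has dimension exactly $3$, so $\dim X=3$; and the top-dimensional components are the at most $t$ closures coming from $\Omega$ together with $D$, giving at most $t+1$ in total.
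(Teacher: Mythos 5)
Your argument follows essentially the same route as the paper's: both eliminate $s$ via the identity $(t_2-t_2')F_1-(t_1-t_1')F_2=H$, identify $X\cap\{t_1\neq t_1',\,t_2\neq t_2'\}$ with the corresponding open piece of the hypersurface $V(H)$ by solving for $s$, and then account separately for the diagonal; the only difference is presentational (the paper argues that a top-dimensional component contained in $V(t_1-t_1')$ must be the diagonal by exhibiting a point, whereas you decompose the boundary locus directly). One caveat, which the paper's proof shares: your assertion that $V(t_1-t_1',s,\widetilde{f}(t_2,t_2'))$ has dimension $2$ --- and hence that $D$ is the only $3$-dimensional component inside $\{(t_1-t_1')(t_2-t_2')=0\}$ --- requires $\widetilde{f}\neq 0$ (and symmetrically $\widetilde{g}\neq 0$), while the hypothesis only excludes both vanishing simultaneously. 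If, say, $\widetilde{f}=0$, then $X\cap\{t_1=t_1'\}$ contains the full $3$-dimensional component $V(s,t_1-t_1')$, distinct from $D$, and your bookkeeping breaks; a direct decomposition in that degenerate case shows $X$ still has exactly $t+1$ top-dimensional components, so the statement survives, but the case needs to be dispatched separately (the paper simply asserts without justification that both $\widetilde{f}$ and $\widetilde{g}$ are nonzero). Apart from this shared edge case, your proof is correct.
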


\begin{proof}
Let $Z$ be an irreducible component of $X$ of top dimension; we know that $\dim Z\geq 3$. Set $\text{Diag}=V(t_1-t_1',t_2-t_2').$ Note also that both
$\widetilde{f}$ and $\widetilde{g}$ have to be nonzero. 

Suppose first that $Z\subset V(t_1-t_1').$ Then $Z\subset V(s(t_2-t_2'))$ and so 
$Z\subset V(t_1-t_1',s)\cup V(t_1-t_1',t_2-t_2').$ Since these are irreducible and $3$-dimensional, either $Z=V(t_1-t_1',s)$, or $Z=\text{Diag}.$ The former case is impossible: take any $t_2,t_2'$ with $t_2\neq t_2', \widetilde{f}(t_2,t_2')\neq 0$;
then the point $(0,0,0,t_2,t_2')$ belongs to $Z$ but not to $X$. 
So, $Z\subset V(t_1-t_1')$ implies $Z=\text{Diag}$. 
Similarly, $Z\subset V(t_2-t_2')$ implies $Z=\text{Diag}$. 

Assume from now on that a generic point in $Z$ satisfies
$t_1\neq t_1', t_2\neq t_2'$, i.e., $Z\cap\{t_1\neq t_1',t_2\neq t_2'\}$ is an open dense subset of $Z$.  

Let
\[T=V((t_2-t_2')^2\widetilde{f}(t_2,t_2')-(t_1-t_1')^2\widetilde{g}(t_1,t_1'))\subset\A^4_{t_1,t_1',t_2,t_2'}.\]
By assumption, $T$ has at most $t$ irreducible components, each of them of dimension $3$. Since 
$\hat{T}:=T\cap\{t_1\neq t_1',t_2\neq t_2'\}$ is open in $T$, it has at most $t$ irreducible components, each of them of dimension $3$.

Note that the map
\begin{align*}
X\cap\{t_1\neq t_1',t_2\neq t_2'\} &\longrightarrow T\cap\{t_1\neq t_1', t_2\neq t_2'\}\\
(s,t_1,t_1',t_2,t_2') &\longmapsto (t_1,t_1',t_2,t_2')
\end{align*}
is an isomorphism,  with inverse 
\[(t_1,t_1',t_2,t_2')\mapsto \left(-\frac{(t_2-t_2')\widetilde{f}(t_2,t_2')}{t_1-t_1'},t_1,t_1',t_2,t_2'\right).\]

Consider the diagram
\[
\xymatrix{
 Z \ar@{^{(}->}[r]^-{\text{closed}} & X &{}\\
 Z\cap\{t_1\neq t_1',t_2\neq t_2'\}\ar@{^{(}->}[r]_-{\text{closed}}\ar@{^{(}->}[u]_-{\text{open dense}} & 
 X\cap\{t_1\neq t_1',t_2\neq t_2'\} \ar@{^{(}->}[u]_-{\text{open}}\ar[r]^-{\simeq} & T\cap\{t_1\neq t_1',t_2\neq t_2'\}
}
\]
Note that
\[\dim Z=\dim (Z\cap\{t_1\neq t_1',t_2\neq t_2'\})\leq 3=\dim (X\cap\{t_1\neq t_1',t_2\neq t_2'\})\leq \dim X,\]
and hence the assumption $\dim Z=\dim X$ implies that 
this common dimension has to equal 
$3$.  The first horizontal arrow on the bottom is a closed embedding between varieties of the same dimension, 
and since $Z\cap\{t_1\neq t_1',t_2\neq t_2'\}$ is irreducible,
it has to be one of the irreducible components of $X\cap\{t_1\neq t_1',t_2\neq t_2'\}$. The latter is isomorphic to $\hat{T}$ and thus
has at most $t$ components. Therefore, $Z$ is the Zariski closure in $X$ of one of the components of 
$X\cap\{t_1\neq t_1',t_2\neq t_2'\},$ hence there are at most $t$
possibilities for $Z$. Counting in $\text{Diag},$ we deduce that indeed, $X$ has at most $t+1$ top--dimensional irreducible components. 
\end{proof}

We will need the following easy preparation:
\begin{Lem}
For polynomials $L,M$ in one variable, the number of factors of $xL(x)-yM(y)\in k[x,y]$ equals the number of factors of 
$(t_2-t_2')L(t_2-t_2')-(t_1-t_1')M(t_1-t_1')\in k[t_1,t_1',t_2,t_2'].$
\end{Lem}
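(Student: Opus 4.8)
The plan is to recognize the second polynomial as a substitution instance of the first, and then to reduce to the elementary fact that neither an invertible linear change of coordinates nor the introduction of dummy variables alters the irreducible factorization in a polynomial ring over $k$.

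First I would set $P(x,y)=xL(x)-yM(y)\in k[x,y]$ and observe that substituting $x=t_2-t_2'$ and $y=t_1-t_1'$ yields exactly
\[(t_2-t_2')L(t_2-t_2')-(t_1-t_1')M(t_1-t_1')=P(t_2-t_2',\,t_1-t_1').\]
Next I would introduce coordinates $u=t_2-t_2'$ and $w=t_1-t_1'$ and note that $(t_1,t_1',t_2,t_2')\mapsto(w,t_1',u,t_2')$ is a $k$-linear automorphism of $\A^4$ (its matrix has determinant $1$, so there is no characteristic restriction). This induces a $k$-algebra automorphism of $k[t_1,t_1',t_2,t_2']=k[u,w,t_1',t_2']$ carrying the target polynomial to $uL(u)-wM(w)=P(u,w)$. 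Since an automorphism of a polynomial ring preserves units and irreducible elements, it preserves the entire factorization, and in particular the number of irreducible factors (whether counted with or without multiplicity).

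It then remains to compare the factorization of $P(u,w)$ in the subring $k[u,w]$ with its factorization in $k[u,w,t_1',t_2']$. Here I would use that $k[u,w]$ is a UFD with unit group $k^{\times}$, and that $k[u,w,t_1',t_2']=(k[u,w])[t_1',t_2']$ has the same unit group. If $\pi\in k[u,w]$ is irreducible, then $k[u,w]/(\pi)$ is a domain, so $(k[u,w]/(\pi))[t_1',t_2']\cong k[u,w,t_1',t_2']/(\pi)$ is a domain; hence $\pi$ remains prime, in particular irreducible, in the larger ring. Consequently the irreducible factorization of $P(u,w)$ in $k[u,w]$ is literally its factorization in $k[u,w,t_1',t_2']$, so the factor counts agree. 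Chaining the two identifications gives the claim.

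I expect no serious obstacle: the only point requiring care is confirming that passing to the extra variables $t_1',t_2'$ introduces no new irreducible factors and no new units, which is exactly the UFD argument above; everything else is a renaming of coordinates.
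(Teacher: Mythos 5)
Your proof is correct and is essentially the paper's argument: the paper uses the very same change of variables $(t_1,t_1',t_2,t_2')\mapsto(t_2-t_2',t_1-t_1',t_2',t_1')$, phrased geometrically as an isomorphism $V((t_2-t_2')L(t_2-t_2')-(t_1-t_1')M(t_1-t_1'))\simeq V(xL(x)-yM(y))\times\A^2$ and a comparison of irreducible components. Your algebraic formulation (a ring automorphism followed by the observation that irreducibles of $k[u,w]$ stay prime, and units stay units, after adjoining the dummy variables $t_1',t_2'$) is, if anything, slightly more faithful to the statement, which counts irreducible factors of polynomials rather than components of varieties.
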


\begin{proof}
The map
\begin{align*}
V((t_2-t_2')L(t_2-t_2')-(t_1-t_1')M(t_1-t_1')) &\longrightarrow V(xL(x)-yM(y))\times\A^2\\
(t_1,t_1',t_2,t_2')   &\longmapsto (t_2-t_2',t_1-t_1',t_2',t_1')
\end{align*}
is an isomorphism, with inverse $(x,y,p,q)\mapsto (y+q,q,x+p,p)$, and hence these two varieties have the same number of
irreducible components. 
\end{proof}

The new ingredient that we will need is the following result of M. Zieve \cite{Z}: 

\begin{thm} Let $p>2$. Suppose that $f,g$ are linearized polynomials over $\overline{\F_p}$ with $f'(0)g'(0)\neq 0$ and $f(0)=0, g(0)=0$. Then 
$xf(x)-yg(y)$ has at most $3$ irreducible factors. 
\end{thm}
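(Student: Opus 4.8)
The plan is to translate the statement into a question about the monodromy of the two one-variable maps $F(x)=xf(x)$ and $G(y)=yg(y)$, viewed as branched covers $\P^1\to\P^1$ of the affine $t$-line of degrees $p^n{+}1$ and $p^m{+}1$ (here $p^n=\deg f$, $p^m=\deg g$). By the classical theorem of Fried on factorization of variable-separated polynomials, the number of irreducible factors of $F(x)-G(y)$ over $\overline{\F_p}$ equals the number of orbits of the geometric monodromy group $M=\mathrm{Gal}(\Sigma/\overline{\F_p}(t))$ on $\Omega\times\Delta$, where $\Omega=F^{-1}(t)$, $\Delta=G^{-1}(t)$ are the generic fibers and $\Sigma$ is the compositum of their splitting fields. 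Fixing $\alpha_0\in\Omega$, this equals the number of orbits of $\mathrm{Stab}_M(\alpha_0)$ on $\Delta$, so it suffices to show this orbit count never exceeds $3$.

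The next step is a ramification analysis that exploits the linearized hypothesis. Since $f$ is linearized with $f(0)=0$, we have $F'(x)=f(x)+a_0x$ with $a_0=f'(0)\ne0$, which is again a linearized polynomial, and $F''=2a_0\ne0$ because $p>2$. Hence $F'$ is separable and its $p^n$ roots form an $n$-dimensional $\F_p$-subspace $U\subset\overline{\F_p}$; every finite critical point of $F$ has ramification index exactly $2$, and $F$ is totally ramified over $\infty$, contributing a $(p^n{+}1)$-cycle to $M_F:=\mathrm{Mon}(F)$. I would then single out the fiber over $t=0$: since $U\cap\ker f=0$ (because $f(v)=0$ together with $f(v)=-a_0v$ forces $v=0$), the only ramified point over $0$ is $x=0$, so the inertia there is a single transposition. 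Thus $M_F$ is a transitive subgroup of $\mathrm{Sym}(\Omega)$ containing both a full $(p^n{+}1)$-cycle and a transposition, and likewise for $M_G$.

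The heart of the argument is to convert this ramification data into an orbit bound. If $M_F$ is primitive, then a transitive primitive group containing a transposition is the full symmetric group by Jordan's theorem, so $M_F=S_{p^n+1}$ and similarly $M_G=S_{p^m+1}$; since the only common quotients of distinct large symmetric groups are trivial or the sign character, $\mathrm{Stab}_M(\alpha_0)$ surjects onto a subgroup of index at most $2$ in $M_G$, giving at most $2$ orbits on $\Delta$ — and exactly $2$ in the borderline case where the two covers are isomorphic, i.e. $yg(y)=F(\gamma y)$, where $2$-transitivity of the symmetric group splits $\Omega\times\Delta$ into the diagonal and its complement. If instead $M_F$ is imprimitive, the $(p^n{+}1)$-cycle forces the blocks to be cosets of a cyclic subgroup and forces the within-block monodromy to contain both a shorter cycle and the transposition above; I would argue that this makes the within-block action $2$-transitive (equivalently, that any decomposition $F=E\circ F_1$ has $2$-transitive inner monodromy), so that the relation ``same block'' partitions $\Omega\times\Delta$ into at most the three classes equal / partner / distinct-block, yielding at most $3$ orbits. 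The bound $3$ is attained already for $p=3$, $f=g=x^3+a_0x$, where $xf(x)=(x^2)^2+a_0x^2$ and indeed
\[xf(x)-yf(y)=(x-y)(x+y)(x^2+y^2+a_0).\]

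The step I expect to be the main obstacle is the imprimitive case: ruling out ``bad'' block systems in which the within-block monodromy fails to be $2$-transitive (for instance a purely cyclic action on blocks of size $\ge3$, which would produce a fourth orbit). This is precisely where the linearized structure must be used in an essential way — the fact that the critical locus $U$ is an $\F_p$-vector space, together with the genus-zero Riemann--Hurwitz constraint coming from $F''=2a_0$ being a nonzero constant, should restrict the possible decompositions $F=E\circ F_1$ enough to guarantee $2$-transitive inner monodromy. Carrying this out rigorously, and checking that the gluing group $M$ cannot combine two separately harmless imprimitivities into four orbits, is the technical core of the theorem.
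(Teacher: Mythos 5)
A preliminary remark: the paper does not prove this statement at all --- it is imported as an external result of Zieve (\cite{Z}), so there is no internal proof to measure your argument against and your proposal has to stand on its own. As a strategy it is a reasonable one: Fried's correspondence between the irreducible factors of $F(x)-G(y)$ and the orbits of a point stabilizer in the monodromy group is the standard tool for variable-separated polynomials, and your ramification analysis is correct. Indeed $F'=f+f'(0)x$ is additive and separable, $F''=2f'(0)\neq 0$ (here $p>2$ is used) forces every finite critical point to be simple, the fibre over $t=0$ meets the critical locus only at $x=0$ because $\ker(f)\cap\ker(f+f'(0)x)=0$, so the inertia over $0$ is a single transposition while the inertia over $\infty$ is a full $(p^n+1)$-cycle. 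The primitive case then goes through by Jordan's theorem essentially as you describe, and your $p=3$ example with $xf(x)-yf(y)=(x-y)(x+y)(x^2+y^2+a_0)$ correctly shows that the bound $3$ is attained.

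The gap is the one you name yourself, and it is not a technicality: it is the entire content of the theorem. Observing that the within-block group again contains a full cycle and a transposition merely reproduces your hypotheses one level down; it does not yield $2$-transitivity, since that very data is compatible with an imprimitive (e.g.\ dihedral) action. What must actually be excluded is a composition factor of $xf(x)$ with dihedral inner monodromy: if one could write $xf(x)=\ell_1\circ D_d\circ \ell_2$ with $D_d$ a Dickson polynomial of degree $d\geq 4$ --- whose finite critical points are also all simple, so your Riemann--Hurwitz constraint from $F''$ being a nonzero constant does not rule this out --- then $D_d(x,a)-D_d(y,a)$ has on the order of $d/2$ irreducible factors and the conclusion would fail. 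So the additive structure of $f$ has to enter the argument in an essential, quantitative way that your sketch only gestures at (``should restrict the possible decompositions''); compare how the paper's Lemma \ref{truly_poly_in_two_vars} handles the analogous decomposition question for $(x-y)(f(x)-f(y))$ by a direct, term-by-term comparison of homogeneous components rather than by group theory. Separately, your closing trichotomy ``equal / partner / distinct block'' tacitly assumes that the block systems of the two covers are matched up through the gluing group $M$ inside $M_F\times M_G$; when $f\neq g$ this identification is an additional claim that also requires proof. As written, the proposal is a plausible program, not a proof.
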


\begin{proof}[Proof of Proposition \ref{mixed_vars_prop}]

The fiber product of the map $\varphi$ with itself is  the variety
\[X=V(s(t_1-t_1')+(t_2-t_2')\widetilde{L}(t_2,t_2'), s(t_2-t_2')+(t_1-t_1')\widetilde{M}(t_1,t_1'))\subset\A^5_{s,t_1,t_1',t_2,t_2'}.\]
If it is $3$-dimensional and has only $2$ components of top dimension, then the size of the image on $\F_q$-points of $\varphi$ will be at least $\frac{q^3}{2}-O(q^\frac{5}{2}).$ So, we have to consider the case when the polynomial $(t_2-t_2')^2\widetilde{L}(t_2,t_2')-(t_1-t_1')^2\widetilde{M}(t_1,t_1')\in \overline{\F_p}[t_1,t_1',t_2,t_2']$ is reducible.
By Schinzel's theorem and the case $e=2$ of Lemma \ref{truly_poly_in_two_vars}, this can happen only when both $L$ and $M$ are linearized. We can assume that $L(0)=M(0)=0,$ since a shift does not change the size of $\varphi(\F_q^3)$. 
 
So, let $L,M$ be linearized polynomials with $L'(0)M'(0)\neq 0$ and $L(0)=0, M(0)=0.$ 
The number of factors 
in $\overline{\F_p}[t_1,t_1',t_2,t_2']$ 
of $(t_2-t_2')^2\widetilde{L}(t_2,t_2')-(t_1-t_1')^2\widetilde{M}(t_1,t_1')=(t_2-t_2')L(t_2-t_2')-(t_1-t_1')M(t_1-t_1')$ equals the number of factors 
in $\overline{\F_p}[x,y]$ of
$xL(x)-yM(y),$ which is at most $3$, by Zieve's theorem. So, the statement follows from Lemma \ref{most_basic_lem_mixed_vars} with $t=3.$
\end{proof}

We finish with two more special cases of Conjecture
\ref{main_conj} in the case of mixed variables. 
There is one obvious case when the fiber product $X$
of $\varphi$ with itself can acquire many components, namely, when $L=M$. We handle this case now. 

\begin{Lem}
Let $f(t)\in\overline{\F_p}[t]$ be any linearized polynomial. Assume that $p\geq 5.$ Let $L(t_1,t_2)=f(t_2), M(t_1,t_2)=f(t_1).$ Then,
notation as in Conjecture \ref{main_conj}, we have:
\[|E_{\F_q}|\geq \frac{p-3}{p-1}q^3\geq \frac{q^3}{2}.\]
\end{Lem}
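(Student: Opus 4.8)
The plan is to apply Lemma~\ref{one_over_C_bound} to the map $\varphi$ from Conjecture~\ref{main_conj} with the special choice $L(t_1,t_2)=f(t_2)$, $M(t_1,t_2)=f(t_1)$. First I would write down the fiber product $X=\varphi\times_{\A^3}\varphi$ explicitly. Since here $st_1+L=st_1+f(t_2)$ and $st_2+M=st_2+f(t_1)$, the two defining equations of $X\subset\A^5_{s,t_1,t_1',t_2,t_2'}$ become
\[
X=V\bigl(s(t_1-t_1')+f(t_2)-f(t_2'),\ s(t_2-t_2')+f(t_1)-f(t_1')\bigr).
\]
Using $f(t_2)-f(t_2')=(t_2-t_2')\widetilde f(t_2,t_2')$ and similarly for the $t_1$ pair, this is exactly the variety in Lemma~\ref{most_basic_lem_mixed_vars} with $\widetilde f=\widetilde f(t_2,t_2')$ and $\widetilde g=\widetilde f(t_1,t_1')$. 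So I would invoke that lemma, which reduces the count of top-dimensional components of $X$ to counting irreducible factors of the polynomial
\[
(t_2-t_2')^2\widetilde f(t_2,t_2')-(t_1-t_1')^2\widetilde f(t_1,t_1')=(t_2-t_2')f(t_2-t_2')-(t_1-t_1')f(t_1-t_1'),
\]
whose factor count (via the preparatory isomorphism lemma) equals that of $xf(x)-yf(y)\in k[x,y]$.

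\textbf{Why the component count is large, and the remedy.}
The key feature of the case $L=M$ is that $xf(x)-yf(y)$ can have many factors, so Lemma~\ref{one_over_C_bound} alone gives only a weak bound of the form $q^3/C$ with $C$ potentially large. The cleanest route is therefore \emph{not} to push the generic Lang--Weil argument, but to exploit that $f$ is linearized directly, exactly as in the ``both linearized'' endgame of the two proofs of Proposition~\ref{separated_vars}. I would restrict attention to the fiber over a fixed value $s=a\in\F_q$. Over such a slice the map sends $(t_1,t_2)\mapsto (at_1+f(t_2),\,at_2+f(t_1))$, and I would use Lemma~\ref{many_perturbations_of_lin_are_PP} to control when the associated linearized maps are invertible.

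\textbf{The main estimate.}
The main step is to show that for a positive-density set of $a\in\F_q$, the slice map $(t_1,t_2)\mapsto(at_1+f(t_2),at_2+f(t_1))$ is injective on $\F_q^2$, so its image has full size $q^2$, giving a contribution of $q^2$ to $|E_{\F_q}|$ for each such $a$ (the $s$-coordinate separates the slices). Because $f$ is linearized, this slice map is an $\F_p$-linear endomorphism of $\F_q^2$ with block form $\begin{pmatrix} a & f\\ f & a\end{pmatrix}$ acting $\F_p$-linearly, and it is a bijection precisely when it has trivial kernel; its ``determinant'' factors (over the linearized composition ring) through $a^2-f\circ f$, i.e. through whether $f(x)+ax$ and $f(x)-ax$ are both permutation polynomials. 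By Lemma~\ref{many_perturbations_of_lin_are_PP}, each of $f(x)+ax$ and $f(x)-ax$ fails to be a permutation polynomial for at most $\tfrac{1}{p-1}q$ values of $a$; removing these (at most $\tfrac{2}{p-1}q$ bad values) leaves at least $\tfrac{p-3}{p-1}q$ good values of $a$, each contributing $q^2$ injective image points. Hence
\[
|E_{\F_q}|\geq \frac{p-3}{p-1}q\cdot q^2=\frac{p-3}{p-1}q^3\geq\frac{q^3}{2},
\]
the last inequality using $p\geq 5$.

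\textbf{Main obstacle.}
The hard part will be the linear-algebra reduction: making precise the claim that the $\F_p$-linear map $(t_1,t_2)\mapsto(at_1+f(t_2),at_2+f(t_1))$ is bijective exactly when the two perturbations $f\pm a\,\mathrm{id}$ are both bijective. The natural device is the change of coordinates $u=t_1+t_2$, $v=t_1-t_2$ (valid since $p\neq 2$), which diagonalizes the swap symmetry: the map becomes $u\mapsto au+f(u)$ and $v\mapsto av-f(v)$ up to sign conventions, decoupling into the two scalar linearized maps $f(x)+ax$ and $-(f(x)-ax)$. Once decoupled, injectivity of the pair is equivalent to injectivity of each factor, and Lemma~\ref{many_perturbations_of_lin_are_PP} applies verbatim to each. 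I would need to check that this change of variables is an $\F_q$-linear automorphism of the source and that it interacts correctly with the linearized structure of $f$; this is routine given $p\geq 5$, but it is the one place where the special shape $L=M$ is genuinely used.
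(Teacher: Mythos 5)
Your proposal is correct and follows essentially the same route as the paper: both discard the fiber-product/Lang--Weil machinery for this case, use Lemma \ref{many_perturbations_of_lin_are_PP} to bound the bad set $B\cup(-B)$ by $\frac{2q}{p-1}$, and show that for the remaining $\frac{p-3}{p-1}q$ values of $s$ the slice map covers all of $\F_q^2$. The only (cosmetic) difference is that the paper decouples by the triangular substitution $t_1=t_2+x$ and solves the two permutation-polynomial equations sequentially, whereas you diagonalize via $u=t_1+t_2$, $v=t_1-t_2$; both rest on the additivity of $f$ (so one should first normalize $f(0)=0$, as the paper does).
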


\begin{proof}
Without loss of generality, $f(0)=0.$

Let $B=\{s\in\F_q\ |\ f(x)+sx\ \text{is not a permutation polynomial over
$\F_q$}\}$; then we know from Lemma \ref{many_perturbations_of_lin_are_PP} that $|B|\leq\frac{q}{p-1}$ (neglecting the $O(1)$ term). So, $|B\cup (-B)|\leq\frac{2q}{p-1}.$ 
Let $B'=(B\cup (-B))^c,$ so for any $s\in B',$ both $f(x)\pm sx$ are permutation polynomials, and $|B'|\geq \frac{p-3}{p-1}q$. 

We claim that $B'\times\F_q\times\F_q\subset E_{\F_q}.$ Fix any $(s,\beta,\gamma)\in B'\times\F_q\times\F_q.$ Let $x\in\F_q$ be such that $f(x)-sx=\gamma-\beta$, and let $t_2\in\F_q$ be such that $f(t_2)+st_2=\beta-sx.$
Let $t_1=t_2+x$. Then $(s,t_1,t_2)$ maps to $(s,\beta,\gamma).$
\end{proof}

\begin{Rem}
This Lemma gives examples of maps $\A^3_{\F_p}\to\A^3_{\F_p}$ with large image on $\F_q$-points, which are not bijective.  Contrast with the $\frac{5}{6}$ bound of Theorem 1.2 in \cite{GW}.
\end{Rem}

One final special case is handled in the following

\begin{Lem}
Suppose that $L(t_1,t_2)=L(t_2)$ depends only on the second variable, $M(t_1,t_2)=M(t_1)$ depends only on the first variable, and $\deg_{t_1}M\leq 1.$ Then, notation as in Conjecture \ref{main_conj}, for any $\F_q/\F_{q_0}$, we have
\[|E_{\F_q}|\geq\frac{q^3}{3}-O(q^{\frac{5}{2}}).\]
\end{Lem}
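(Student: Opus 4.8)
The plan is to apply Lemma \ref{most_basic_lem_mixed_vars} to the fiber product of $\varphi$ with itself, after first recording the consequence of the hypothesis $\deg_{t_1}M\le 1$: writing $M(t_1)=ct_1+d$, we get $\widetilde{M}(t_1,t_1')=c$, a \emph{constant}. Then the fiber product $\A^3\times_{\A^3}\A^3$ of $\varphi$ with itself is
\[W=V\bigl(s(t_1-t_1')+(t_2-t_2')\widetilde{L}(t_2,t_2'),\ s(t_2-t_2')+c(t_1-t_1')\bigr)\subset\A^5_{s,t_1,t_1',t_2,t_2'},\]
which is precisely the variety of Lemma \ref{most_basic_lem_mixed_vars} with $\widetilde{f}=\widetilde{L}$ and $\widetilde{g}=c$. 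I would first dispose of the degenerate case $c=0$ (that is, $M$ constant) by a direct argument: after the harmless shift making $M\equiv 0$, for every $s\ne 0$ the map $(t_1,t_2)\mapsto(st_1+L(t_2),st_2)$ is a bijection of $\F_q^2$ onto itself, so already $|E_{\F_q}|\ge (q-1)q^2$, far stronger than required.

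Assume now $c\ne 0$. By Lemma \ref{most_basic_lem_mixed_vars}, it suffices to bound the number of irreducible factors of
\[P=(t_2-t_2')^2\widetilde{L}(t_2,t_2')-c(t_1-t_1')^2=(t_2-t_2')\bigl(L(t_2)-L(t_2')\bigr)-c(t_1-t_1')^2.\]
The key step is the observation that $P$ depends on $t_1,t_1'$ only through the difference $w:=t_1-t_1'$. Performing the invertible (triangular, and characteristic-free) change of coordinates that replaces $t_1$ by $w=t_1-t_1'$ while keeping $t_1'$, the polynomial $P$ becomes
\[Q=(t_2-t_2')\bigl(L(t_2)-L(t_2')\bigr)-cw^2\in\overline{\F_p}[t_2,t_2',w],\]
which is independent of $t_1'$; hence $P$ and $Q$ have the same number of irreducible factors. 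I would then view $Q$ as a polynomial of degree $2$ in $w$ over the unique factorization domain $\overline{\F_p}[t_2,t_2']$, with leading coefficient the nonzero constant $-c$. Its content is therefore a unit, so $Q$ is primitive and every irreducible factor has positive $w$-degree; as these $w$-degrees sum to $2$, the polynomial $Q$ has at most two irreducible factors.

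Consequently $P$ has at most two irreducible factors, so Lemma \ref{most_basic_lem_mixed_vars} (applied with $t=2$) shows that $W$ is three-dimensional with at most $3$ top-dimensional irreducible components. Applying Lemma \ref{one_over_C_bound} to $\varphi:\A^3\to\A^3$ with $C\le 3$ then yields $|E_{\F_q}|\ge \frac{q^3}{3}-O(q^{5/2})$, as claimed. I expect the only genuine subtlety to be the reduction isolating the dependence on $w=t_1-t_1'$: once $P$ is recognized as quadratic in this single difference, the factor count is forced, and notably no information about $L$ is needed (neither irreducibility of $\widetilde{L}$ nor whether $(t_2-t_2')(L(t_2)-L(t_2'))$ is a square). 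The case $c=0$ is the other point to watch, but it only improves the bound.
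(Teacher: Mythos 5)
Your proof is correct, but it takes a genuinely different route from the paper's. The paper does not invoke Lemma \ref{most_basic_lem_mixed_vars} here at all: after disposing of $a=0$ exactly as you do, it fixes the last coordinate $\gamma$ of the target, solves $st_2+at_1=\gamma$ for $t_1$, and reduces to the family of two-variable maps $(s,t)\mapsto\bigl(s,\tfrac{s\gamma}{a}-\tfrac{s^2t}{a}+L(t)\bigr)$; the fiber product of such a map with itself is cut out by $\tfrac{1}{a}(t-t')(s^2-a\widetilde{L}(t,t'))$, visibly with $2$ or $3$ top-dimensional components, so Lemma \ref{one_over_C_bound} gives at least $\tfrac{q^2}{3}-O(q^{3/2})$ image points in each slice, and summing over the $q$ values of $\gamma$ gives the claim. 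You instead stay with the full fiber product in $\A^5$ and feed it to Lemma \ref{most_basic_lem_mixed_vars}; your observation that $(t_2-t_2')\bigl(L(t_2)-L(t_2')\bigr)-c(t_1-t_1')^2$ becomes, after the substitution $w=t_1-t_1'$, a quadratic in $w$ with unit leading coefficient --- hence primitive, with every irreducible factor of positive $w$-degree, hence with at most two irreducible factors --- is a valid replacement for the paper's explicit factorization, and it correctly isolates how the hypothesis $\deg_{t_1}M\le 1$ enters (it makes $\widetilde{M}$ a unit). Both arguments yield the same constant $\tfrac13$ and the same worst case (three components exactly when the relevant quadratic splits, i.e.\ when $\widetilde{L}$ is a square up to the constant $c$). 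One small caveat: if $L$ is constant then $\widetilde{L}=0$; the statement of Lemma \ref{most_basic_lem_mixed_vars} (which requires only that $\widetilde f,\widetilde g$ be not both zero) still covers you, but its proof tacitly treats both as nonzero, so it is cleaner to note that this degenerate case is trivial since $\varphi$ is then injective on each slice $s\neq0$. The paper's slicing is more self-contained and explicit; your version has the advantage of running in parallel with the proof of Proposition \ref{mixed_vars_prop} in Section \ref{subsection_case_mixed_vars}.
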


\begin{proof}
Without loss of generality, $M(0)=0$ (replacing $M$ by $M-M(0)$ only shifts the last coordinates of the value sets, leaving the size unchanged). Write $M(t_1)=at_1, a\in\F_{q_0}$. The case $a=0$ is easy: we are dealing with the map $(s,t_1,t_2)\longmapsto (s,st_1+L(t_2),st_2).$ For any $(\alpha,\beta,\gamma)\in\F_q^3$ with $\alpha\neq 0,$ take $s=\alpha, t_2=\frac{\gamma}{\alpha},$ and solve $st_1+L(t_2)=\beta$ for $t_1.$ In this case, the size of the image of the map is at least $q^3-q^2$. Assume from now on that $a\neq 0,$
so we are considering the map
\[\F_q^3\to\F_q^3,\ (s,t_1,t_2)\mapsto (s,st_1+L(t_2),st_2+at_1).\] 

Fix $\gamma\in\F_q.$ We will count the number of points in the image of the above map with last coordinate $\gamma$, and show that their number is at least $\frac{q^2}{3}-O(q^\frac{3}{2}).$

The condition that the last coordinate is $\gamma$ is $t_1=\frac{\gamma-st_2}{a}.$ Now setting $t=t_2$, we are looking at the map
\[\A^2\to\A^2, \ 
(s,t)\mapsto \left(s,\frac{s\gamma}{a}-\frac{s^2 t}{a}+L(t)\right).\]
The fiber product of this map with itself is given by
\[
\{(s,t,t')\in\A^3\ |\ \frac{1}{a}(t-t')(s^2-a\widetilde{L}(t,t'))=0\}.\]
This has either $2$ or $3$ irreducible components of top dimension, depending on whether $\widetilde{L}(t,t')$ is a square in $\overline{\F_p}[t,t'].$ The conclusion now follows from Lemma \ref{one_over_C_bound}. 
\end{proof}

\subsection{Open questions}

Unfortunately, if we take the fiber product of the map $\varphi$ in Conjecture \ref{main_conj} with itself, we cannot characterize the cases when we get more than $4$ geometrically irreducible components. 
Explicitly, this fiber product is given by the two equations
\begin{equation}
\begin{aligned}
&s(t_1-t_1')+L(t_1,t_2)-L(t_1',t_2')=0\\
&s(t_2-t_2')+M(t_1,t_2)-M(t_1',t_2')=0
\label{pencil_surfaces}
\end{aligned}
\end{equation}
in $\A^5_{s,t_1,t_2,t_1',t_2'}$, and it is not clear how to control the number of irreducible
components of top dimension. If one carefully modifies the argument in Lemma \ref{most_basic_lem_mixed_vars},
this investigation would reduce to the following

\begin{Que}
Is it possible to characterize the cases when a polynomial
\[(t_2-t_2')\left(L(t_1,t_2)-L(t_1',t_2')\right)-(t_1-t_1')\left(M(t_1,t_2)-M(t_1',t_2')\right)\]
in $\overline{\F_p}[t_1,t_2,t_1',t_2']$ is reducible? Or, thinking of
(\ref{pencil_surfaces}) as a pencil of surfaces in $\A^4$ with parameter $s$, it is true that for all but $O_{\deg(L),\deg(M)}(1)$ values of $s$,  
the corresponding surface has at most $4$ irreducible components of dimension $2$, except in certain cases that we can classify? Or, is it true that for at least $\frac{q}{2}$ values of $s\in\F_q$, the corresponding surface is geometrically irreducible, again except in a certain list of cases?
\label{question_reducibility}
\end{Que}

The reason we hope that our special cases give sufficient evidence for Conjecture \ref{main_conj}
is that polynomials of fewer variables in lower--dimensional affine spaces are more likely to be reducible, so in fact, we think that the cases we have handled are the ``worst" cases, as long as
our conjecture is concerned. 

\section*{Acknowledgments}
This research was performed while the author was visiting the Institute for Pure and Applied Mathematics (IPAM), which is supported by the National Science Foundation.
I thank Terry Tao for the extremely fruitful, inspiring, and encouraging discussions during my IPAM participation. 
I am gratefully indebted to Michael Zieve for the numerous discussions, suggestions, and references, and specifically for proving the result of \cite{Z} that I asked him about. 
 I also thank 
Kiran Kedlaya for some discussions, and  
 Greta Panova for a suggestion concerning Lemma \ref{truly_poly_in_two_vars}.

\end{document}